\documentclass[11pt,oneside,reqno]{amsart}

\usepackage{amssymb,amscd,stmaryrd}
\usepackage[mathcal]{eucal}
\usepackage{array,float}
\usepackage{enumitem}
\usepackage{mathtools}
\usepackage{xcolor}

\usepackage{xy}
\input xy
\xyoption{all}
\usepackage{pdflscape}
\usepackage{hyperref}
\usepackage[left=3cm, right=3cm]{geometry}

\numberwithin{equation}{section}
\numberwithin{equation}{subsection}

\newtheorem{thm}{Theorem}[section]

\newtheorem{corollary}[thm]{Corollary}

\newtheorem{definition}[thm]{Definition}
\newtheorem*{remark*}{Remark}
\newtheorem{remark}[thm]{Remark}

\newcommand{\Hom}{{\mathrm{Hom}}}

\newcommand{\tra}{{\mathrm{tra}}}
\newcommand{\res}{{\mathrm{res}}}

\newcommand{\Ho}{{\mathrm{H}}}
\newcommand{\proj}{\mathrm{Proj}}

\newcommand{\bC}{{\mathbb C}}

\newcommand{\irr}{\mathrm{Irr}}

\title[On Projective representations of special $p$-groups]{On Projective representations of 
special $p$-groups}

\author{Sumana Hatui}
\address{School of Mathematical Sciences,
	National Institute of Science Education and Research, An OCC of Homi Bhabha National Institute, Bhubaneswar 752050, Odisha, India}

\email{sumanahatui@niser.ac.in, sumana.iitg@gmail.com}

\begin{document}

\subjclass[2010]{20C25, 20G05, 20F18}
\keywords{Schur multiplier, Projective representations, Representation group, Special $p$-groups}
\begin{abstract}
	Let $G$ be a special $p$-group. If $G$ is of rank two, or $G$ is of maximum rank with $|G^p|\leq p$, then we describe the complex irreducible projective representations of $G$. 
\end{abstract}

\maketitle 

\section{Introduction and main results}
The theory of projective representations was first studied by Schur in \cite{IS4, IS7, IS11}.  A projective representation is a homomorphism from a group into the projective general linear groups.  Such representations arise naturally in the study of ordinary representations of groups and have significant connections to several areas, such as topology, Lie theory, and mathematical physics.  
By definition, every ordinary representation of a group is projective, but the converse need not be true.  Detailed treatment of this theory can be found in  \cite{GK}. We know that the irreducible ordinary representations of abelian groups are one dimensional, which is not valid in the case of projective representations. So the difficulty of the projective representations starts here, and it is not fully understood yet for the abelian groups. For some special cases, it has been studied  in \cite{Moa}, \cite{Moa2}, \cite{RH}.

The projective representations of Dihedral groups are also well known in the literature; see ~\cite[Theorem~7.3, page 126]{GK}. For the symmetric  groups, the projective representations were initially studied by Schur himself in \cite{IS11} and continued by several authors in \cite{Moa1,  Na1, Na2, WB}.  Projective representations of discrete Heisenberg groups over cyclic rings and over rings of order $p^2$ are described in \cite{PS, hatui2022projective}.
Continuing this line of investigation,  we study the complex irreducible projective representations of special $p$-groups $G$ for the following cases:\\
(i) $G$ is of rank $2$.\\
(ii) $G$ is of maximum rank having $G^p=1$ or $G^p \cong \mathbb Z/p\mathbb Z$. \\

A group $G$ is called a \emph{special $p$-group} of rank $k$ if $G$ is a $p$-group of order $p^n$ having the commutator subgroup $G'$ is equal to the center  $Z(G)$ and $G/G'$  is elementary abelian of order $p^{n-k}$. 
Observe that in this case $G' \cong (\mathbb Z/p\mathbb Z)^k$, $G^p\subseteq G'$ and $p \leq |G'| \leq p^{\frac{1}{2}d(d-1)}$ for $d=n-k$.  
Special $p$-groups with minimum rank are the extra-special $p$-groups, and their projective representations are described in \cite[Corollary 1.5]{PS}. In this article, we consider the special $p$-groups having rank $2$ or having maximum rank.
\\

Here we fix some notations before stating our main results.
For a group $G$, $G^p$ denotes the subgroup  generated by the elements $x^p, x \in G$.
A group $G$ is said to be  \emph{capable}  if there exists a group $H$ such that $G \cong H/Z(H)$.
The epicenter of $G$ is denoted by $Z^*(G)$, which is  the smallest central subgroup of $G$ such that $G/Z^*(G)$ is capable. 
The second cohomology group $\Ho^2(G,\mathbb C^\times)$ is called the Schur multiplier of $G$.
The set $Z^2(G, \mathbb C^\times)$  consists of all $2$-cocycles  of $G$.  For an element $\alpha \in Z^2(G, \mathbb C^\times)$, the corresponding element of $\Ho^2(G, \mathbb C^\times)$ will be denoted by $[\alpha]$.
For each $2$-cocycle $\alpha$ of $G$,  $\mathrm{Irr}^\alpha(G)$ denotes the set of all inequivalent  irreducible complex $\alpha$-representations of $G$.
We define 
$$ \text{Proj}(G):=\bigcup_{\{[\alpha] \in \Ho^2(G,\mathbb C^\times)\}}\mathrm{Irr}^\alpha(G).$$ 
The set of all  inequivalent irreducible complex ordinary representations of $G$ is denoted by $\irr(G)$.
For $m \geq 1$, $ES_p(p^{2m+1})$ and  $ES_{p^2}(p^{2m+1})$ denotes the extra-special $p$-groups   of order $p^{2m+1}$ of exponent $p$ and of exponent  $p^2$ respectively. In this article, we always assume $p$ is an  odd prime.

When we write a presentation of a group, we assume all the commutators $[x, y]$ for generators $x, y$, which are not explicitly stated in the presentation, are identity.

\bigskip

\begin{definition}
	A finite group $G^*$ is called a representation group of $G$ if there is a  subgroup   $Z\subseteq (G^*)' \cap Z(G^*) $ such that $Z\cong \Ho^2(G, \mathbb C^\times)$ and $G^*/Z \cong G$.
\end{definition}
It follows by \cite[Corollary 3.3]{PS} that there is a bijective correspondence  between the sets Proj$(G)$ and $\irr(G^*)$.

\bigskip

Now we are ready to  state our main results. 
\subsection{Special $p$-groups of rank $2$} Let $G$ be a special $p$-group of rank $2$.
An explicit description of the Schur multiplier of $G$ and a classification of capable groups among $G$ are given in \cite{hatuispecial}.
Now, the following result describes the projective representations of $G$.
\begin{thm}\label{specialrank2}
	Let $G$ be a special $p$-group of  rank $2$ minimally generated by $d$ elements, $d \geq 2$. Then the following holds:
	\begin{enumerate}
		\item  Suppose $G^p=G'$ and  $G$ is not capable, then there is a bijective correspondence between  $\mathrm{Proj}(G)$  and $\proj\big((\mathbb Z/p\mathbb Z)^{d}\big)$.

			\item If $G^p=1$ with $G$ is not capable, or $G^p\cong \mathbb Z/p\mathbb Z$, then for some central subgroup $Z \subseteq Z^*(G)$ of order $p$,
		$$G/Z \cong ES_p(p^{2m+1}) \times (\mathbb Z/p\mathbb Z)^{(d-2m)} ~\text{  for some } m \geq 1.$$\\
		(i)~  If $m \geq 2$, then there is a bijective correspondence between  $\mathrm{Proj}(G)$  and $\proj((\mathbb Z/p\mathbb Z)^{d})$.
		\\
		(ii) ~  If $m =1$, then there is a bijective correspondence between  $\mathrm{Proj}(G)$ and $\irr(G^*)$, where
		\begin{eqnarray*}
			&& G^*=\langle 
			x_1,x_2, y_r, 1 \leq r \leq d-2\mid  [x_1,x_2]=z, [z,x_k]=z_k, [y_r, x_k]=w_{rk}, [y_i,y_j]=z_{ij}, \\
			&&\hspace{4cm}  x_k^p=y_r^p=z^p=1, 1\leq i <j \leq d-2,   1\leq k \leq 2
			\rangle.
		\end{eqnarray*}
\end{enumerate}
\end{thm}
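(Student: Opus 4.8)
The plan is to route the entire statement through the bijection $\mathrm{Proj}(G)\leftrightarrow\irr(G^*)$ of \cite[Corollary 3.3]{PS}, so that the problem becomes one of \emph{identifying a convenient representation group} $G^*$ of $G$ and recognizing its irreducible ordinary representations. Concretely, I would first invoke the computation of $\Ho^2(G,\bC^\times)$ and the capability criterion for rank-$2$ special $p$-groups from \cite{hatuispecial}, and then build $G^*$ by lifting a minimal presentation of $G$: generators $\tilde{x}_1,\dots,\tilde{x}_d$ lifting a generating set of $G$, with the commutator and $p$-power relations of $G$ replaced by their central modifications inside $(G^*)'\cap Z(G^*)\cong\Ho^2(G,\bC^\times)$. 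The guiding principle is that a representation group of $(\mathbb{Z}/p\mathbb{Z})^d$ is the relatively free group $F$ of exponent $p$ and nilpotency class $2$ on $d$ generators (a special $p$-group of exponent $p$ and maximal rank $\binom{d}{2}$); hence each asserted bijection with $\proj((\mathbb{Z}/p\mathbb{Z})^{d})$ amounts to proving $G^*\cong F$, while the exceptional case $m=1$ is exactly where this isomorphism fails and the explicit $G^*$ must be exhibited instead.

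For part (2) I would first establish the quotient structure. Since the chosen $Z$ lies in the epicenter $Z^*(G)$, quotienting by a line $Z$ of order $p$ inside $G'\cong(\mathbb{Z}/p\mathbb{Z})^2$ kills one of the two alternating forms defining the commutator map $\wedge^2(G/G')\to G'$; the surviving form has even rank $2m$, and splitting off its radical yields $G/Z\cong ES_p(p^{2m+1})\times(\mathbb{Z}/p\mathbb{Z})^{(d-2m)}$. The admissibility of this choice of $Z$ (namely $Z\subseteq Z^*(G)$) is precisely what the non-capability hypothesis, together with the classification in \cite{hatuispecial}, provides; in the $G^p\cong\mathbb{Z}/p\mathbb{Z}$ case the same analysis applies after accounting for the rank-one image of the $p$-power homomorphism $G/G'\to G'$, which is a homomorphism since $p$ is odd and $G$ has class $2$.

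With the quotient in hand, I would compute $G^*$ through the covering groups of extra-special $p$-groups. The key input is that for $m\geq 2$ a representation group of $ES_p(p^{2m+1})$ is again the free class-$2$ exponent-$p$ group on $2m$ generators (an order count gives $p^{2m+1}\cdot p^{2m^2-m-1}=p^{2m^2+m}$, matching $|F|$ on $2m$ generators), whereas for $m=1$ the covering group of the Heisenberg group $ES_p(p^3)$ has order $p^5$ and nilpotency class $3$. Assembling this with the elementary abelian factor, and using that the hypotheses of each case (non-capability, or $G^p\cong\mathbb{Z}/p\mathbb{Z}$) force the residual $p$-power and rank-$2$ commutator relations of $G$ to be absorbed into the center of the cover, I expect $G^*\cong F$ in cases (1) and (2)(i), giving $\irr(G^*)=\irr(F)\leftrightarrow\proj((\mathbb{Z}/p\mathbb{Z})^{d})$. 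In case (2)(ii) the class-$3$ contribution survives: taking $z=[x_1,x_2]$ and its further commutators $z_k=[z,x_k]$ as genuinely new central generators reproduces exactly the displayed presentation, and one checks directly that $|G^*|=|G|\,|\Ho^2(G,\bC^\times)|$ with $Z\subseteq(G^*)'\cap Z(G^*)$ and $G^*/Z\cong G$, so that $G^*$ is a representation group and the asserted bijection with $\irr(G^*)$ is the general one.

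The main obstacle is this last step: proving that the lifted presentation of $G^*$ collapses to that of $F$ in the generic cases while retaining precisely the class-$3$ relations $[z,x_k]=z_k$ when $m=1$. This is a delicate bookkeeping of the Hopf-formula module $(R\cap[F_0,F_0])/[F_0,R]$ for a free presentation $F_0=F_0/R$ of $G$: one must verify that the two independent commutator relations defining $G'\cong(\mathbb{Z}/p\mathbb{Z})^2$, and in part (1) the $p$-power relations $x_i^p\in G'$, become redundant modulo $[F_0,R]$ exactly under the stated hypotheses, so that all $\binom{d}{2}$ lifted commutators become independent and of order $p$. Matching the resulting group against $F$ (or against the explicit class-$3$ presentation) on the nose, rather than merely matching orders, is the computational heart of the argument, and the $m=1$ exception is the sharpest point, since there the multiplier of $ES_p(p^3)$ is too small to free the commutator $[x_1,x_2]$ from the center.
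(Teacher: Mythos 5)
Your strategy of realizing every bijection in the theorem through an isomorphism of representation groups fails for a concrete reason: exponents. The relatively free group $F$ of class $2$ and exponent $p$ on $d$ generators has exponent $p$ (recall $p$ is odd), so every quotient of $F$ has exponent dividing $p$. On the other hand, in case (1) we have $G^p=G'\neq 1$, hence $\exp(G)=p^2$; since any representation group $G^*$ of $G$ surjects onto $G$, it has exponent at least $p^2$, and therefore $G^*\cong F$ is impossible. The same obstruction applies whenever $G^p\cong\mathbb Z/p\mathbb Z$, which is part of case (2)(i); and in case (2)(ii) with $G^p\cong\mathbb Z/p\mathbb Z$ it defeats your claim that the displayed group $G^*$ satisfies $G^*/Z\cong G$ with $Z\cong \Ho^2(G,\mathbb C^\times)$: that $G^*$ is generated by elements of order $p$ and has class $3$, so for $p>3$ it is a regular $p$-group of exponent $p$ and cannot have a quotient isomorphic to $G$. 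Thus the isomorphisms that your ``computational heart'' is meant to certify do not exist, and no amount of Hopf-formula bookkeeping will produce them. Your plan is sound only in the sub-case $G^p=1$, $m\geq 2$ of (2)(i), where $F$ is indeed a representation group of $G$ itself.

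The idea your proposal is missing is that the bijections of the theorem are not induced by isomorphisms (or any comparison) of representation groups; they come from surjectivity of inflation. By \cite[Theorem 3.2]{PS}, if $N$ is a central subgroup such that $\inf:\Ho^2(G/N,\mathbb C^\times)\to\Ho^2(G,\mathbb C^\times)$ is surjective, then $\proj(G)$ and $\proj(G/N)$ are in bijection, even though the two groups are not isomorphic and their representation groups need not be either --- case (1) is exactly such an instance. The paper runs this reduction twice. First, $Z\subseteq Z^*(G)$ forces the inflation from $\Ho^2(G/Z,\mathbb C^\times)$ to be surjective by \cite[Theorem 2.5.10]{karpilovsky}, giving $\proj(G)\leftrightarrow\proj(G/Z)$ (Theorem \ref{epicenter}); this is how one passes to $H=G/Z\cong ES_{p^2}(p^{2m+1})\times(\mathbb Z/p\mathbb Z)^{d-2m}$ in case (1), respectively $ES_{p}(p^{2m+1})\times(\mathbb Z/p\mathbb Z)^{d-2m}$ in case (2). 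Second, in cases (1) and (2)(i) the inflation $\Ho^2(H/H',\mathbb C^\times)\to\Ho^2(H,\mathbb C^\times)$ is again surjective, giving $\proj(H)\leftrightarrow\proj\big((\mathbb Z/p\mathbb Z)^d\big)$. In case (2)(ii) the displayed $G^*$ is proved to be a representation group of the quotient $G/Z$, not of $G$, and the chain $\proj(G)\leftrightarrow\proj(G/Z)\leftrightarrow\irr(G^*)$ finishes the proof. Once the epicenter reduction is in place, your analysis of $H$ (including the correct observation that the covering group of $ES_p(p^3)$ is the only place where class $3$ survives) can be reused; without it, the argument cannot be repaired.
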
 
By \cite[Proposition 3]{heineken}, we have the following.
\begin{corollary}
	If $G $ is a special $p$-group of rank $2$ of order $p^n, n\geq 8$ and $G^p=G'$, then there is a bijective correspondence between  $\mathrm{Proj}(G)$  and $\proj\big((\mathbb Z/p\mathbb Z)^{d}\big)$.
\end{corollary}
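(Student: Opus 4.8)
The plan is to reduce the statement entirely to Theorem~\ref{specialrank2}(1), whose conclusion is exactly the asserted bijection between $\mathrm{Proj}(G)$ and $\proj\big((\mathbb Z/p\mathbb Z)^{d}\big)$, subject to the two hypotheses that $G^p = G'$ and that $G$ is not capable. The first hypothesis is given outright, so the whole content of the corollary collapses to verifying the second: that a special $p$-group of rank $2$ and order $p^n$ with $n \geq 8$ cannot be capable.

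First I would record the elementary numerics tying the order to the number of generators. Since $G$ has rank $2$, we have $G' = Z(G) \cong (\mathbb Z/p\mathbb Z)^2$ and $G/G' \cong (\mathbb Z/p\mathbb Z)^{d}$, so $|G| = p^{d+2}$ and hence $n = d+2$. Thus the hypothesis $n \geq 8$ is equivalent to $d \geq 6$, and it is precisely this lower bound on the minimal number of generators that should be matched against the capability criterion.

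Next I would invoke \cite[Proposition~3]{heineken}, which constrains the minimal number of generators (equivalently, the order) of a capable special $p$-group of rank $2$: such a group must satisfy $d \leq 5$, i.e. $|G| \leq p^{7}$. Combined with $d \geq 6$ from the previous step, this forces $G$ to be non-capable. Feeding $G^p = G'$ together with the non-capability of $G$ into Theorem~\ref{specialrank2}(1) then delivers the bijection and completes the proof.

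The only substantive point, and hence the step most in need of care, is the alignment of the numerical thresholds. One must check that the generator/order bound extracted from \cite[Proposition~3]{heineken} applies to exactly the class at hand (special, rank $2$), and that the inequality runs in the correct direction after translating among $n$, $d$, and $|G|$, so that $d \geq 6 \Rightarrow G$ not capable rather than the reverse. Once the threshold is pinned down so that $n \geq 8$ lands strictly beyond the capable range, the remainder is a direct citation of Theorem~\ref{specialrank2}(1) with no further computation.
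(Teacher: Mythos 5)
Your proposal is correct and is exactly the paper's (implicit) argument: the corollary is presented there as an immediate consequence of Theorem~\ref{specialrank2}(1) together with \cite[Proposition 3]{heineken}, the latter being invoked precisely to rule out capability of a special $p$-group of rank $2$ with $G^p=G'$ once $|G|\geq p^{8}$. Your explicit bookkeeping ($|G|=p^{d+2}$, so $n\geq 8$ is equivalent to $d\geq 6$, which lies beyond the capable range $|G|\leq p^{7}$) just spells out the threshold comparison the paper leaves to the reader.
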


\subsection{Special $p$-groups of maximum rank}
Let $G$ be a special $p$-group  of  maximum rank, minimally generated by $d$ elements  and $d \geq 3$.  The Schur multiplier of these groups are described in \cite{rai2018schur}.
We write  $G/G'=\langle x_1 G', x_2 G', \cdots, x_d G' \rangle$ and $[x_i,x_j]=y_{ij}, ~1 \leq i<j \leq d$. Then  
$$G'\cong \bigoplus_{1 \leq i < j \leq d}\langle y_{ij} \rangle \cong (\mathbb Z/p\mathbb Z )^{\frac{1}{2}d(d-1)},$$
and every element of $G$ can be written uniquely as
\[
\prod_{i=1}^d x_i^{r_i}  \prod_{1 \leq j <k \leq d} y_{jk}^{s_{jk}} ~\text{ for } 1 \leq r_i , s_{jk}< p.
\]
We consider the two cases  $G^p=1$ and $G^p\cong \mathbb Z/p\mathbb Z$ separately. 


%
%

\subsubsection{$G^p=1$}
In this case,  we study projective representations of $G$ by describing $2$-cocycles  and   a representation group of $G$.
\begin{thm} \label{cocycledescription,repgroup_exp p}
	Let $G$ be a special $p$-group  of maximum rank having exponent $p$, and  $G$ be minimally generated by $d$ elements, $d \geq 3 $.  Then the following holds.
\begin{enumerate}
	\item  Every $2$-cocycle of   $G$ is  cohomologous to a cocycle of the form
	\begin{eqnarray*}
		\alpha\Big(\prod_{i=1}^d x_i^{r_i}  \prod_{1 \leq j <k \leq d} y_{jk}^{s_{jk}}, \prod_{i=1}^d x_i^{r'_i}  \prod_{1 \leq j <k \leq d} y_{jk}^{s'_{jk}}\Big) &=& \prod_{1\leq i <j <k \leq d}\mu_{ijk}^{r_is'_{jk}-r_ks'_{ij}} \prod_{1\leq    i<j <k \leq d}\mu_{jik}^{r_js'_{ik}+r_ks'_{ij}} \\
		&&\prod_{1 \leq i<j\leq d }\mu_{iij}^{r_is'_{ij}}  \prod_{1 \leq i<j\leq d }\mu_{jij}^{r_js'_{ij}} 
	\end{eqnarray*} 
	for $\mu_{ijk} \in \mathbb C^\times$ such that $\mu_{ijk}^p=1$.\\

	\item The group \begin{eqnarray*} 
		H_d^*=\langle x_i,  1 \leq i \leq d \mid [x_j, x_k]=y_{jk}, [x_i, y_{jk}]=z_{ijk},  z_{rmn}=z_{nmr}z_{mnr}^{-1}, x_i^p=1, y_{jk}^p=1,\\
		1 \leq m<n<r\leq d,  
		1 \leq j < k \leq d \rangle
	\end{eqnarray*}
	is a representation group of $G$.
\end{enumerate}
\end{thm}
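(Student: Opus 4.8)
The plan is to prove the two parts in sequence, using part (1) to organize the construction in part (2). First I would set up the standard machinery for computing $2$-cocycles. Since $G$ has exponent $p$ and maximum rank, I fix the section-based presentation already given in the excerpt: every element is written uniquely as $\prod_i x_i^{r_i}\prod_{j<k}y_{jk}^{s_{jk}}$, and I treat the $x_i$ and $y_{jk}$ as a transversal basis. A $2$-cocycle $\alpha$ is determined up to coboundary by its values on generators together with the factor-set identities; using the cocycle identity $\alpha(g,h)\alpha(gh,l)=\alpha(h,l)\alpha(g,hl)$ repeatedly, I would reduce an arbitrary $\alpha$ by multiplying by a coboundary $\delta\beta$ chosen to normalize $\alpha$ on the $x_i$ and to kill redundant terms. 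The target is to show that the only surviving data are the scalars attached to the ``straightening'' relations, namely moving an $x$ past a commutator $y_{jk}$ and moving one $x_i$ past another $x_j$. Because $\Ho^2(G,\mathbb C^\times)$ is known explicitly from \cite{rai2018schur}, I would use that rank count as a target dimension to confirm that the parameters $\mu_{ijk}$ (subject to $\mu_{ijk}^p=1$) exhaust all cohomology classes and that no further relations among them are forced; the precise bilinear exponents $r_is'_{jk}-r_ks'_{ij}$ etc.\ emerge from tracking the commutator $[x_i,x_j]=y_{ij}$ through the reordering of a product.

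For part (2), the strategy is to exhibit $H_d^*$ as a group satisfying the definition of representation group: I must produce a central subgroup $Z\subseteq (H_d^*)'\cap Z(H_d^*)$ with $Z\cong\Ho^2(G,\mathbb C^\times)$ and $H_d^*/Z\cong G$. I would take $Z=\langle z_{ijk}\rangle$ to be the subgroup generated by the triple commutators. First I would verify that $Z$ is central and contained in the derived subgroup, which is essentially built into the presentation since the $z_{ijk}=[x_i,y_{jk}]$ are second-level commutators with all further commutators trivial. The relation $z_{rmn}=z_{nmr}z_{mnr}^{-1}$ encodes the Jacobi/Hall--Witt identity and is exactly what cuts the free abelian group on the $z_{ijk}$ down to the correct rank; I would check that modulo these relations the independent $z_{ijk}$ are in bijection with a basis of $\Ho^2(G,\mathbb C^\times)$, matching the cocycle parameters $\mu_{ijk}$ from part (1). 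Then setting the $z_{ijk}=1$ collapses $H_d^*$ onto the presentation of $G$, giving $H_d^*/Z\cong G$.

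The main obstacle I anticipate is the bookkeeping of the Jacobi-type relations among the $z_{ijk}$ and establishing that $Z$ has exactly the order $|\Ho^2(G,\mathbb C^\times)|$ rather than being too large or too small. Concretely, the commutators $[x_i,y_{jk}]=[x_i,[x_j,x_k]]$ are constrained by the Hall--Witt identity, and the relation $z_{rmn}=z_{nmr}z_{mnr}^{-1}$ must be shown both to hold in any group with these commutation rules and to be the \emph{only} independent constraint, so that the rank of $Z$ equals the dimension of the Schur multiplier. I would handle this by a careful commutator-collection argument (a Hall collection process for class-$3$ groups of exponent $p$), verifying that $H_d^*$ has the expected order $|G|\cdot|\Ho^2(G,\mathbb C^\times)|$ and that $Z$ is generated by an independent set of the asserted size. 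Establishing equality of orders, via the known multiplier computation as the benchmark, is what pins down that $H_d^*$ is a genuine representation group and not merely a central extension of the wrong size.
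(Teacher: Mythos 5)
Your part (2) follows essentially the same route as the paper. The paper also takes as its central subgroup the group $K=\langle z_{ijk}\rangle$ (your $Z$; the relation $z_{rmn}=z_{nmr}z_{mnr}^{-1}$ eliminates the dependent generators), shows $K\cong \Ho^2(G,\mathbb{C}^\times)$ and $H_d^*/K\cong G$, and, exactly as you anticipate, the crux is pinning down $|H_d^*|=|G|\,|\Ho^2(G,\mathbb{C}^\times)|=p^{\frac{1}{6}d(d+1)(2d+1)}$. The only real difference is mechanism: instead of a Hall collection argument, the paper gets the order by induction on $d$ from the visible decomposition $H_d^*\cong\big(H_{d-1}^*\times(\mathbb{Z}/p\mathbb{Z})^{d^2-1}\big)\rtimes\mathbb{Z}/p\mathbb{Z}$, with base case $|H_2^*|=p^5$; this is much lighter than collection in a free class-$3$ exponent-$p$ group, though both are legitimate. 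Your observations that the relation $z_{rmn}=z_{nmr}z_{mnr}^{-1}$ is the Hall--Witt/Jacobi constraint and that $2\binom{d}{3}+2\binom{d}{2}=\frac{1}{3}d(d-1)(d+1)$ matches the multiplier rank are exactly the right bookkeeping.

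Part (1) is where you genuinely diverge, and where your plan has a gap. The paper never normalizes cocycles by hand: it proves (via the Hopf formula) the exact sequence $1\to\Hom(G',\mathbb{C}^\times)\to\Ho^2(G/G',\mathbb{C}^\times)\to\Ho^2(G,\mathbb{C}^\times)\xrightarrow{\overline{\chi}}\Hom\big(\frac{G/G'\otimes G'}{X},\mathbb{C}^\times\big)\to 1$, notes that for maximum rank $\Ho^2(G/G',\mathbb{C}^\times)\cong\Hom(G',\mathbb{C}^\times)$ forces the inflation to vanish so that $\overline{\chi}$ is an isomorphism, and then writes representatives by inverting $\overline{\chi}$ and expanding a homomorphism $f$ over the generators $x_iG'\otimes y_{jk}+x_jG'\otimes y_{ki}+x_kG'\otimes y_{ij}$ of $X$; that single identity is the sole source of the exponents $r_is'_{jk}-r_ks'_{ij}$ and $r_js'_{ik}+r_ks'_{ij}$. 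Your alternative (coboundary normalization plus a dimension count against the known multiplier) needs two things you do not supply. First, a count only closes the argument if distinct tuples $(\mu_{ijk})$ give non-cohomologous classes; detecting that requires the coboundary-invariant pairing $(x,y)\mapsto\alpha(x,y)\alpha(y,x)^{-1}$ on pairs in $G\times G'$ --- i.e.\ precisely the map $\overline{\chi}$ --- which your sketch never isolates. Second, and more seriously, the exponents will not simply ``emerge from tracking the commutator through the reordering'': a function depending only on the $x$-exponents $(r_i)$ of its first argument and the $y$-exponents $(s'_{jk})$ of its second cannot satisfy the cocycle identity on the nose. Straightening the $x$-parts of the second and third arguments produces a trilinear obstruction: testing $g=x_1$, $h=x_2$, $l=x_1$, the identity $\alpha(g,h)\alpha(gh,l)=\alpha(g,hl)\alpha(h,l)$ forces $\mu_{112}=1$, since $hl=x_1x_2y_{12}^{-1}$ while $gh=x_1x_2$. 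So genuine representatives must carry correction terms, cubic in the exponents, coming from the collection process (equivalently, from a section of the stem extension $H_d^*\to G$), and handling these --- showing how they are absorbed without changing the cohomology class --- is the actual content of the reduction. Neither the Jacobi relations in $X$ nor the dimension count makes this disappear, so as written your normalization cannot terminate in the displayed bilinear form; this is the step your proposal leaves entirely open.
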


\subsubsection{$G^p\cong \mathbb Z/p\mathbb Z$}
Now we state our main result for the special $p$-groups of maximum rank with $G^p\cong \mathbb Z/p\mathbb Z$. 
\begin{thm} \label{cocycleGp=p,projGp=p}
	Let $G$ be a special $p$-group of maximum rank having $|G^p| = p$, and $G$ be minimally generated by d elements with $d \geq 3$.  Then the following holds:
\begin{enumerate}
	\item Every $2$-cocycle of   $G$ is  cohomologous to a cocycle of the form
	\begin{eqnarray*}
		\alpha\big(\prod_{i=1}^d x_i^{r_i}  \prod_{1 \leq j <k \leq d} y_{jk}^{s_{jk}}, \prod_{i=1}^d x_i^{r'_i}  \prod_{1 \leq j <k \leq d} y_{jk}^{s'_{jk}}\big) = \prod_{1\leq i <j <k \leq d}\mu_{ijk}^{r_is'_{jk}-r_ks'_{ij}} 	\prod_{\substack{1\leq   i< j <k \leq d \\ (i,j)\neq (1,2)}}\mu_{jik}^{r_js'_{ik}+r_ks'_{ij}} \\
		\prod_{\substack{1 \leq i<j\leq d \\ (i,j)\neq (1,2)}}\mu_{iij}^{r_is'_{ij}}  \prod_{\substack{1 \leq i<j\leq d\\(i,j)\neq (1,2)}}\mu_{jij}^{r_js'_{ij}} 
	\end{eqnarray*} for $\mu_{ijk} \in \mathbb C^\times, 1\leq i \leq d , 1 \leq j<k \leq d$ such that $\mu_{ijk}^p=1$.\\
\item 
	There is a bijective correspondence between $\proj(G)$ and $\mathrm{Irr}(K_d^*)$, where 
	\begin{eqnarray*}
		K_d^*=\langle x_i,  1 \leq i \leq d \mid [x_j, x_k]=y_{jk},  [x_i, y_{jk}]=z_{ijk},   z_{i12}=1, 
		z_{rmn}=z_{nmr}z_{mnr}^{-1}, x_i^p=1, \\
		y_{jk}^p=1, 1 \leq m<n<r\leq d,  
		1 \leq j < k \leq d \rangle.
	\end{eqnarray*}
\end{enumerate}
\end{thm}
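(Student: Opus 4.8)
The plan is to handle both parts at once by identifying $K_d^*$ as a representation group of $G$ and then reading the cocycle of part (1) off the resulting stem extension. Everything runs parallel to the exponent-$p$ treatment of Theorem \ref{cocycledescription,repgroup_exp p}; the sole new feature is the family of relations $z_{i12}=1$.

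I would first normalize the $p$-power map. Since $G$ has maximum rank, $Z(G)=G'=\langle y_{jk}\rangle$, and for odd $p$ the assignment $gG'\mapsto g^p$ is a linear map $G/G'\to G'$; the hypothesis $|G^p|=p$ forces it to have rank one, so after adjusting the generating set I may take $x_1^p=y_{12}$ and $x_i^p=1$ for $i\geq 2$, whence $G^p=\langle y_{12}\rangle$. A Hall--Petrescu collection, using $y_{jk}^p=1$ and the fact that commutators of weight four vanish, then shows that the relation $x_1^p=y_{12}$ forces $[x_j,y_{12}]=z_{j12}=1$ for every $j$; these are exactly the relations distinguishing $K_d^*$ from the exponent-$p$ representation group $H_d^*$.

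Next I would verify that $K_d^*$ is a representation group of $G$. The surjection $\pi\colon K_d^*\to G$ fixing each $x_i$ has kernel $Z:=\langle z_{ijk}\rangle$, and since $z_{ijk}=[x_i,[x_j,x_k]]\in(K_d^*)'\cap Z(K_d^*)$, this $Z$ is a central stem subgroup with $K_d^*/Z\cong G$ by construction. The real content is the equality $|Z|=|\Ho^2(G,\mathbb{C}^\times)|$. Applying the Hall--Witt relation $z_{rmn}=z_{nmr}z_{mnr}^{-1}$ to reduce to triples whose first index is smallest or middle, and then imposing $z_{i12}=1$, I would produce a spanning set for $Z$ and count it: against the exponent-$p$ tally $2\binom{d}{3}+2\binom{d}{2}$, the relations $z_{i12}=1$ remove exactly $d$ generators ($z_{112}$, $z_{212}$, and for each $k\geq 3$ the identification $z_{21k}=z_{12k}$ forced by $z_{k12}=1$), giving $\dim_{\mathbb{F}_p}Z=2\binom{d}{3}+2\binom{d}{2}-d$. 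Comparing this with the Schur multiplier computed in \cite{rai2018schur} confirms the claim, and part (2) follows at once from the bijection between $\proj(G)$ and $\irr(G^*)$ recorded after the definition of representation group (\cite[Corollary 3.3]{PS}).

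For part (1) I would push the extension $1\to Z\to K_d^*\xrightarrow{\pi}G\to 1$ down to a cocycle. With the section $s$ sending each normal form $\prod x_i^{r_i}\prod y_{jk}^{s_{jk}}$ to the identically written element of $K_d^*$, I would evaluate $\alpha(g,h)=s(g)s(h)s(gh)^{-1}$ by carrying each $x_i^{r_i}$ of $s(g)$ past each $y_{jk}^{s'_{jk}}$ of $s(h)$; each interchange yields a power of $z_{ijk}$, and applying a character $\chi$ of $Z$ with $\mu_{ijk}=\chi(z_{ijk})$ reproduces the stated formula, the condition $\mu_{ijk}^p=1$ coming from $z_{ijk}^p=1$. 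Interchanges against $y_{12}$ contribute nothing because $z_{i12}=1$, and this is precisely what deletes the $(i,j)=(1,2)$ terms from the last three products. I expect the multiplier count of the previous paragraph to be the main obstacle: one must check that imposing $z_{i12}=1$ on top of the Hall--Witt relations causes no further, unanticipated collapse among the surviving $z_{ijk}$, so that $|Z|$ is exactly $|\Ho^2(G,\mathbb{C}^\times)|$ and the correspondence neither over- nor under-counts the projective representations.
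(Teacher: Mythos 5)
There is a genuine gap, and it sits at the heart of your argument: $K_d^*$ is \emph{not} a representation group of $G$, and the surjection $\pi\colon K_d^*\to G$ ``fixing each $x_i$'' does not exist. In $K_d^*$ every generator satisfies $x_i^p=1$, whereas in $G$ one has $x_1^p=y_{12}\neq 1$, so the assignment $x_i\mapsto x_i$ does not respect the relations. The obstruction is not just this particular map: no surjection $K_d^*\to G$ can exist. Indeed, any homomorphic image of $K_d^*$ is generated by the $d$ images of the $x_i$, each of order dividing $p$; since $d(G)=d$, these images would form a minimal generating set of $G$ and hence a basis of $G/G'$. But for odd $p$ the map $gG'\mapsto g^p$ is a well-defined homomorphism $G/G'\to G'$ (class $2$, $G'=Z(G)$ elementary abelian), and $|G^p|=p$ says this map is nonzero, so it cannot vanish on a basis. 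Concretely, your central subgroup $Z=\langle z_{ijk}\rangle$ satisfies $K_d^*/Z\cong G/G^p$ (the exponent-$p$ group of maximum rank), not $G$; the coincidence $|K_d^*|=|G|\cdot|\Ho^2(G,\mathbb{C}^\times)|$ makes the claim tempting, but the required quotient isomorphism fails. Since your part (1) is read off the nonexistent extension $1\to Z\to K_d^*\to G\to 1$, it collapses together with part (2).

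The repair is exactly the detour the paper takes, and it cannot be avoided by your route. First, $G$ is not capable: $G/G'\otimes G^p\subseteq X$, so $G^p\subseteq Z^*(G)$ (Theorem \ref{capabilityG^p=p}), which gives a bijection between $\proj(G)$ and $\proj(G/G^p)$ via surjectivity of inflation. Second, one shows $K_d^*$ is a representation group of $G/G^p$ (here your order count $2\binom{d}{3}+2\binom{d}{2}-d$ and the identification $K_d^*\cong H_d^*/\langle z_{i12}\rangle$ are correct and do real work), and then \cite[Corollary 3.3]{PS} yields $\proj(G/G^p)\leftrightarrow\irr(K_d^*)$. Composing the two bijections proves part (2). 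For part (1), the paper does not use any extension of $G$ at all: it invokes the isomorphism $\Ho^2(G,\mathbb{C}^\times)\cong\Hom\bigl(\tfrac{G/G'\otimes G'}{X},\mathbb{C}^\times\bigr)$ of Theorem \ref{maxrankSchurmultiplier} together with the explicit inverse $\eta$, where now $X$ acquires the extra generators $x_rG'\otimes y_{12}$ (coming from $X_2$ because $x_1^p=y_{12}$); killing $f$ on these elements is what deletes the $(i,j)=(1,2)$ parameters and removes exactly $d$ of them, matching the multiplier $(\mathbb{Z}/p\mathbb{Z})^{\frac{1}{3}d(d-1)(d+1)-d}$. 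Your Hall--Petrescu heuristic for why $z_{i12}$ ``should'' die is fine as motivation for the shape of $K_d^*$, but it cannot substitute for the capability step, because the group whose stem cover $K_d^*$ actually is, is $G/G^p$ rather than $G$.
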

It follows from the above results that, to study $\proj(G)$ for special $p$-groups $G$ such that $G$ is of rank two, or $G$ is of maximum rank with $|G^p|\leq p$, it is enough to understand $\proj\big((\mathbb Z/p\mathbb Z)^{d}\big)$, $\irr(G^*), \irr(H_d^*)$ and $\irr(K_d^*)$.
We refer to \cite[Theorem  1.4 and  Section 4]{PS}  for studying $\proj\big((\mathbb Z/p\mathbb Z)^{d}\big)$. For the description of $\irr(G^*), \irr(H_d^*)$ and $\irr(K_d^*)$; see  Section \ref{ordinary}.

\section{Prerequisites}
In this section, we recall and prove the results that will be used in the preceding sections.
Let 
\begin{equation*}
1\to A \to G \to G/A \to 1
\end{equation*} 
be a central extension.
Consider a section $\mu: G/A \rightarrow G$. The map  $\tra:\Hom( A,\bC^\times) \to \Ho^2(G/A, \bC^\times) $ denotes the transgression homomorphism, which is given by $f \mapsto [\tra(f)]$, where
$$\tra(f)(\overline{x},\overline{y}) = f(\mu (\overline{x})\mu(\overline{y})\mu(\overline{xy})^{-1}),\,\, \overline{x}, \overline{y} \in G/A.$$
 The inflation homomorphism, $\inf : \Ho^2(G/A, \bC^\times)   \to  \Ho^2(G, \bC^\times)$ is given by $[\alpha] \mapsto [\inf(\alpha)]$, where $\inf(\alpha)(x,y) = \alpha(xA,yA)$. 
 The restriction homomorphism  
 $\res:  \Ho^2(G, \bC^\times)  \to  \Ho^2(A, \bC^\times)$ is defined by $[\alpha] \mapsto [\res(\alpha)]$, where $\res(\alpha)(x,y) = \alpha(x,y)$, $x,y \in A$. 
 The map
  $\chi: \Ho^2(G, \bC^\times)  \to  \Hom\big(G/(G'A)  \otimes A,\mathbb C^\times\big)$ is defined by $$\chi([\alpha])(\bar{g}, a) = \alpha(g,a) \alpha(a,g)^{-1}$$
   for all $\bar{g} = gG'A\in  G/G'A$ and $a \in A$.\\

Using  Hochschild-Serre spectral sequence \cite[Theorem 2, p.~129]{HS} and using \cite[Proposition 1.1]{IM}, 
we have the following exact sequence.
\begin{eqnarray}\label{Iwahori}
&& 1 \rightarrow \Hom( A\cap G' ,\bC^\times) \xrightarrow[]{\tra} \Ho^2(G/A, \bC^\times)  \xrightarrow{\inf} \Ho^2(G, \bC^\times) \\
&&\hspace{6cm}\xrightarrow{(\res, \chi)}  \Ho^2(A, \bC^\times)  \oplus \Hom\big(G/(G'A)  \otimes A,\mathbb C^\times\big). \nonumber
\end{eqnarray}

The special $p$-groups  $G$ has nilpotency class $2$ with $G/G'$ elementary abelian. For computing the Schur multiplier of these groups, we refer \cite{Blackburn} to the readers.
Consider $G/G'$ and $G'$ as vector spaces over the field $\mathbb Z/p\mathbb Z$.
Let $X_1$ be the subspace of $G/G' \otimes G'$ generated by the elements of the form 
\[
xG' \otimes [y,z] + yG' \otimes [z,x] +zG' \otimes [x,y], ~ x,y,z \in G
\]
and $X_2$ be the subspace  of  $G/G'\otimes G'$ generated by the elements of the form
$xG' \otimes x^p, x \in G$.   Now consider the subspace $X=X_1+X_2$.   
By \cite[Lemma 2.4]{hatuispecial}, $Z \subseteq Z^*(G)$ if and only if  $G/G' \otimes Z \subseteq X$.  
We use these notations throughout the paper without further reference.
\\

\begin{thm}\label{exactsequence}Let $G$ be a $p$-group of nilpotency class $2$ with $G/G'$  elementary abelian.  Then 
	the following sequence is exact.
	\[
	1 \to \Hom(G',\mathbb C^\times)  \xrightarrow{\tra} \Ho^2(G/G', \mathbb C^\times)   \xrightarrow{\inf}   \Ho^2(G, \mathbb C^\times)   \xrightarrow{\overline{\chi}}   \Hom\big(\frac{G/G' \otimes G'}{X},\mathbb C^\times\big) \to 1,
	\]
	where $\overline{\chi}$ is defined by $\overline{\chi}([\alpha])\big((xG'\otimes y)X\big)=\alpha(x,y)\alpha(y,x)^{-1}$ for $x\in G, y \in G'$.
\end{thm}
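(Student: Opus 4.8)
The plan is to specialise the exact sequence \eqref{Iwahori} to the case $A=G'$ and then absorb the restriction term. First note that the hypotheses force $G'$ to be elementary abelian: since $G$ has class $2$ we have $G'\subseteq Z(G)$, and since $G/G'$ is elementary abelian, $x^p\in G'$ for every $x$; combined with the class-$2$ identity $[x,y]^p=[x^p,y]=1$ this makes $G'$ a $\mathbb Z/p\mathbb Z$-vector space, as in the setup. With $A=G'$ we have $A\cap G'=G'$ and $G'A=G'$, so \eqref{Iwahori} reads
\[
1\to\Hom(G',\mathbb C^\times)\xrightarrow{\tra}\Ho^2(G/G',\mathbb C^\times)\xrightarrow{\inf}\Ho^2(G,\mathbb C^\times)\xrightarrow{(\res,\chi)}\Ho^2(G',\mathbb C^\times)\oplus\Hom(G/G'\otimes G',\mathbb C^\times).
\]
This already gives injectivity of $\tra$ and exactness at $\Ho^2(G/G',\mathbb C^\times)$, i.e.\ the first two nontrivial spots of the asserted sequence.

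Next I would show that the restriction component is redundant, namely $\res\equiv 1$. Fix $[\alpha]$ and pass to the associated central extension $1\to\mathbb C^\times\to\tilde G\to G\to1$ with lifts $\tilde g$ satisfying $\tilde g\tilde h=\alpha(g,h)\widetilde{gh}$. For $w\in G'\subseteq Z(G)$ and $g\in G$ the scalar $[\tilde g,\tilde w]\in\mathbb C^\times$ equals $\chi([\alpha])(\overline g\otimes w)$, and $\res([\alpha])$ is encoded by the alternating form $(w,w')\mapsto[\tilde w,\tilde w']$ on $G'$. Because $\gamma_3(G)=1$ and $\mathbb C^\times$ is central, $\gamma_3(\tilde G)\subseteq\mathbb C^\times$, whence $\gamma_4(\tilde G)=[\tilde G,\gamma_3(\tilde G)]=1$; thus $[\gamma_2(\tilde G),\gamma_2(\tilde G)]\subseteq\gamma_4(\tilde G)=1$. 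Since $G'=[G,G]$ is generated by commutators, every lift of an element of $G'$ lies in $\gamma_2(\tilde G)\cdot\mathbb C^\times$, so $[\tilde w,\tilde w']=1$ for all $w,w'\in G'$; the alternating form vanishes and, $G'$ being abelian, $\res([\alpha])=1$. Consequently $\ker(\res,\chi)=\ker\chi$, and since $\overline\chi([\alpha])=1$ iff $\chi([\alpha])$ is trivial on all of $G/G'\otimes G'$, we obtain $\ker\overline\chi=\ker\chi=\ker(\res,\chi)=\mathrm{im}\,\inf$, i.e.\ exactness at $\Ho^2(G,\mathbb C^\times)$ — once $\overline\chi$ is known to be well defined.

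The technical heart is the well-definedness of $\overline\chi$, i.e.\ that $\chi([\alpha])$ annihilates $X=X_1+X_2$; I expect this to be the main obstacle. For $X_2$ it is immediate: $\widetilde{x^p}$ lies in the abelian subgroup $\langle\tilde x,\mathbb C^\times\rangle$, so $[\tilde x,\widetilde{x^p}]=1$ and $\chi([\alpha])(\overline x\otimes x^p)=1$. For $X_1$ I would invoke the Hall--Witt identity in $\tilde G$. Writing $[\tilde y,\tilde z]=c\,\widetilde{[y,z]}$ with $c\in\mathbb C^\times$ gives $[\tilde x,[\tilde y,\tilde z]]=[\tilde x,\widetilde{[y,z]}]=\chi([\alpha])(\overline x\otimes[y,z])$, and likewise for the cyclic permutations. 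As all triple commutators of $\tilde G$ are central, the conjugations in the Hall--Witt identity act trivially and commutators are bilinear modulo the centre, so the identity collapses to the Jacobi relation
\[
[\tilde x,[\tilde y,\tilde z]]\,[\tilde y,[\tilde z,\tilde x]]\,[\tilde z,[\tilde x,\tilde y]]=1.
\]
Translating back, $\chi([\alpha])$ kills the generator $\overline x\otimes[y,z]+\overline y\otimes[z,x]+\overline z\otimes[x,y]$ of $X_1$. Hence $\chi([\alpha])$ factors through $\big(G/G'\otimes G'\big)/X$ and $\overline\chi$ is well defined.

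Finally I would prove $\overline\chi$ surjective. Given $\psi\in\Hom\big((G/G'\otimes G')/X,\mathbb C^\times\big)$, the aim is to realise it as $\overline\chi([\alpha])$ by an explicit factor set: using the transversal from the normal form of $G$ (a chosen basis of $G/G'$ together with $G'$), one prescribes $\alpha$ on pairs of transversal elements so that the resulting commutator pairing is $\psi$, and extends by the cocycle relations. The two families defining $X$ are precisely the consistency conditions: vanishing of $\psi$ on $X_2$ makes $\alpha$ compatible with the power relations $x^p\in G'$, and vanishing on $X_1$ supplies the Jacobi compatibility needed to assign the central triple commutators consistently, so that $\alpha$ is a genuine $2$-cocycle with $\chi([\alpha])=\psi$. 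This yields surjectivity and completes the proof; the bookkeeping in this last construction is the other place demanding care.
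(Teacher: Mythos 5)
Your first three steps are correct, and they give a self-contained route to everything except the final surjectivity, genuinely different from the paper's proof: you specialize \eqref{Iwahori} at $A=G'$, kill the restriction component by showing $\gamma_3(\tilde G)$ is central, hence $\gamma_4(\tilde G)=1$, hence $[\gamma_2(\tilde G),\gamma_2(\tilde G)]=1$, so the preimage of $G'$ in $\tilde G$ is abelian and the restricted class dies (here you should say explicitly that this uses $\mathrm{Ext}(G',\mathbb C^\times)=0$, i.e.\ divisibility of $\mathbb C^\times$, not merely that $G'$ is abelian), and you verify that $\chi([\alpha])$ annihilates $X_1+X_2$ via Hall--Witt. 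The paper instead runs entirely through Hopf's formula: it dualizes the exact sequence $1 \to (G/G'\otimes G')/X \to M(G) \to M(G/G') \to G' \to 1$ of \cite[Corollary 3.2.4]{karpilovsky} with $\Hom(-,\mathbb C^\times)$ and matches it to the transgression--inflation sequence by an explicit commutative diagram.

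The gap is your last step, the surjectivity of $\overline\chi$, and it is not bookkeeping. Your claim that vanishing of $\psi$ on $X$ gives ``precisely the consistency conditions'' for the transversal prescription is false. If you set $\alpha(g_ih_1,g_jh_2)=\psi\big((g_iG'\otimes h_2)X\big)$ for a transversal $\{g_i\}$ of $G'$ in $G$, the cocycle identity $\alpha(a,b)\alpha(ab,c)=\alpha(a,bc)\alpha(b,c)$ fails by exactly the factor $\psi\big((\bar a\otimes c(\bar b,\bar c))X\big)$, where $c(\bar b,\bar c)\in G'$ is the factor set of the extension $1\to G'\to G\to G/G'\to 1$ attached to the same transversal. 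Since every commutator of $G$ is a ratio $c(\bar b,\bar c)c(\bar c,\bar b)^{-1}$ of factor-set values, these values generate all of $G'$, so this prescription is a cocycle only when $\psi=1$. The failure is already visible for $G=H_3(\mathbb Z/p\mathbb Z)$, where $X=0$ and $\Ho^2(G,\mathbb C^\times)\cong(\mathbb Z/p\mathbb Z)^2$: the class with prescribed pairing $\alpha(x_i,y)\alpha(y,x_i)^{-1}=\mu_i$ is carried only by cocycles with additional cubic ``collection'' terms, coming from the class-$3$ covering group $\hat H(p,1)$, never by the bilinear formula. Surjectivity of $\overline\chi$ is, dually (using injectivity of $\mathbb C^\times$), the injectivity of the map $(G/G'\otimes G')/X\to M(G)$, $(fS\otimes xR)X\mapsto [f,x][F,R]$, which is the substance of the Blackburn--Evens computation; that is precisely what the paper imports from \cite[Corollary 3.2.4]{karpilovsky} rather than reproving. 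To close your argument you must either quote that result (at which point the dualization also recovers the rest of the sequence), or genuinely construct, for each $\psi$ killing $X$, a class-$3$ central extension of $G$ realizing $\psi$ as its commutator pairing, in the spirit of the group $H_d^*$ of Theorem \ref{cocycledescription,repgroup_exp p}; that construction, with its cubic correction terms, is where the real work lies.
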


\bigskip

\begin{proof}	
Let $F/R$ be a free presentation of $G$. Then $G'=S/R$ for $S=F'R$ and $G/G'=F/S$.  Let $M(G)$ denotes the subgroup $\frac{F'\cap R}{[F,R]}$.
		By \cite[Theorem 2.4.6]{karpilovsky}, there is  an isomorphism $j:\Hom(M(G),\mathbb C^\times)  \to \Ho^2(G,\mathbb C^\times)$ defined as follows: 
		consider the central extension
	\[
	1 \to R/[F,R] \to F/[F,R] \to F/R \to 1.
	\]
 Then the corresponding transgression map $\tra: \Hom(R/[F,R],\mathbb C^\times) \to \Ho^2(G,\bC^\times)$ is surjective.
Let $\chi \in  \mathrm{Hom}(M(G), \mathbb C^\times)$. By \cite[Lemma 2.1.6]{karpilovsky}, $\chi$  extends to a homomorphism  $R/[F,R] \to \mathbb C^\times$. 
  Since the transgression map 
$\tra: \Hom(R/[F,R], \mathbb C^\times ) \to \Ho^2(G, \mathbb C^\times)$ is surjective, so we have
 $j (\chi)=[\alpha]$ such that 
$$\alpha(xR, yR)=\tra(\chi)(xR,yR)=\chi\big(\mu(xR)\mu(yR)\mu(xyR)^{-1}\big), x, y \in F,$$ for  a section $\mu$ of $\pi $ corresponding to the central extension
$	1 \to  R/[F,R] \to  F/[F,R] \xrightarrow{\pi} F/R \to 1$.

 By \cite[Corollary 3.2.4]{karpilovsky}, we have the following exact sequence
	\[
	1 \to \frac{G/G' \otimes G'}{X}  \xrightarrow{\psi} M(G)\xrightarrow{\phi}   M(G/G')\xrightarrow{\tau} G' \to 1,
	\] 
	where $\psi$ is defined by $\psi\big((fF'R\otimes xR)X\big)=[f,x][F,R]$.
Applying hom functor, the above exact sequence induces the following exact sequence
 \[
	1 \to \Hom(G',\mathbb C^\times)  \xrightarrow{\overline{\tau} }\mathrm{Hom}(M(G/G'), \mathbb C^\times)    \xrightarrow{\overline{\phi} }\mathrm{Hom}(M(G), \mathbb C^\times)  \xrightarrow{\overline{\psi} }  \Hom(\frac{G/G' \otimes G'}{X} ,\mathbb C^\times) \to 1
	\]	
	Using \cite[Proposition 11.8.16]{LR},  we have the following exact sequence from  (\ref{Iwahori}).  
		\[
	1 \to \Hom(G',\mathbb C^\times)  \xrightarrow{\tra} \Ho^2(G/G', \mathbb C^\times)   \xrightarrow{\inf}   \Ho^2(G, \mathbb C^\times)   \xrightarrow{\chi}   \Hom\big(G/G' \otimes G',\mathbb C^\times\big).
	\]
Consider the following diagram
\begin{figure}\small
\xymatrix{ 
	1 \ar[r] &  \mathrm{Hom}(G', \mathbb C^\times) \ar[d]^{id} \ar[r]^{\overline{\tau} }& \mathrm{Hom}(M(G/G'), \mathbb C^\times)  \ar[d]^{j} \ar[r]^{\overline{\phi} } &\mathrm{Hom}(M(G), \mathbb C^\times)  \ar[r]^{\overline{\psi} }  \ar[d]^{j} &  \Hom(\frac{G/G' \otimes G'}{X} ,\mathbb C^\times) \ar[d]^{id} \ar[r] & 1
\\
	1 \ar[r] &   \mathrm{Hom}(G', \mathbb C^\times) \ar[r]^{\mathrm{tra}} & \Ho^2(G/G', \mathbb C^\times) \ar[r] ^{\mathrm{inf}} & \Ho^2(G, \mathbb C^\times) \ar[r]^{\overline{\chi} }  &  \Hom(\frac{G/G' \otimes G'}{X} ,\mathbb C^\times) \ar[r] & 1.\\
}
\end{figure}

Now we prove that the diagram is commutative. Then, the second row is  exact as the first row is exact.

Consider the following commutative diagram with the natural homomorphisms $i_1,i_2, \pi_1, \pi_2, \beta_1$.
\[
\xymatrix{ 
	1 \ar[r] & S/[F,S] \ar[d] \ar[r]^{i_1} & F/[F,S] \ar[r]^{\pi_1} \ar[d]^{\beta_1}& F/S  \ar[d]^{id} \ar[r] & 1\\
		1 \ar[r] & S/R \ar[r]^{i_2}  & F/R\ar[r]^{\pi_2}  & F/S \ar[r] & 1\\
}
\]
It is easy to see that, if $\mu_1$ is a section of $\pi_1$, then $\gamma=\beta_1\circ \mu_1$ is a section of $\pi_2$.
Let $\{y_i; 1 \leq i \leq m\}$ be a coset representatives of $F/S$. Define a section $\mu_1: F/S \to F/[F,S]$ by $\mu_1(y_iS)=y_i[F,S]$. Suppose $y_iy_jS=y_kS$.
For $h \in  \mathrm{Hom}(G', \mathbb C^\times)$, we have
$$(j \circ \overline{\tau}(h))(y_iS,y_jS)=\overline{\tau}(h) \big(\mu_1(y_iS)\mu_1(y_jS)\mu_1(y_iy_jS)^{-1}[F,S]\big)=h(y_iy_jy_k^{-1}R)$$ and $$\tra(h)(y_iS,y_jS)=h\big(\gamma(y_iS)\gamma(y_jS)\gamma(y_iy_jS)^{-1}R\big)=h(y_iy_jy_k^{-1}R).$$ 

Thus $$j \circ \overline{\tau}=\tra.$$

Consider the following commutative diagram with the natural homomorphisms $i_1,i_2, \pi_1, \pi_2, \beta_1,\beta_2$.
\[
\xymatrix{ 
	1 \ar[r] & R/[F,R] \ar[r]^{i_1} \ar[d] & F/[F,R]\ar[r]^{\pi_1} \ar[d]^{\beta_1} & F/R \ar[d]^{\beta_2} \ar[r] & 1\\
	1 \ar[r] & S/[F,S] \ar[r]^{i_2} & F/[F,S] \ar[r]^{\pi_2} & F/S  \ar[r] & 1\\
}
\]
It is easy to see that, if $\mu_2$ is a section of $\pi_1$, then $\gamma': F/S\to F/[F,S]$ defined by 
$\gamma'(xS)=\beta_1\circ \mu_2(xR)$ is a section of $\pi_2$.
Let $\{x_1,x_2, \ldots, x_n\} \in F$ be a coset representative of $F/R$ and we define a section $\mu_2$ of $\pi_1$ by $\mu_2(x_iR)=x_i[F,R]$.
Suppose $x_ix_j R=x_kR$. For $h \in  \mathrm{Hom}(M(G/G'), \mathbb C^\times) $, we have
$$j \circ \overline{\phi}(h) (x_iR,x_jR)= \overline{\phi}(h)(x_i x_j x_k^{-1}[F,R])=h(x_i x_j x_k^{-1}[F,S]),$$
and 
$$\inf \circ j (h) (x_iR,x_jR)= j (h) (x_iS,x_jS)=h\big(\gamma'(x_iS)\gamma'(x_jS)\gamma'(x_ix_jS)^{-1}[F,S]\big)=h(x_i x_j x_k^{-1}[F,S]).$$
Hence, 
$$j \circ \overline{\phi}=\inf \circ j.$$

Observe that, for $h \in \mathrm{Hom}(M(G), \mathbb C^\times)$, we have
\begin{eqnarray*}
\overline{\chi}\circ j(h)((fS \otimes xR)X)&=&j(h)(fR,xR) j(h)(xR,fR)^{-1}, ~f \in F, x\in S\\
&=&h\big(\mu_2(fR)\mu_2(xR) \mu_2(fxR)^{-1}\mu_2(xfR)\mu_2(fR)^{-1}\mu_2(xR)^{-1}\big)\\
&=&h\big(f[F,R]x[F,R]f^{-1}[F,R]x^{-1}[F,R]\big) \text { as } fxR=xfR\\
&=&h\big([f,x][F,R]\big).
\end{eqnarray*}
and $$
\overline{\psi}(h)\big((fS \otimes xR)X\big)=h\circ \psi\big((fS \otimes xR)X\big)=h\big( [f,x][F,R]\big).
$$
	\end{proof}

If a group is not capable, then the following results say that every projective representation of $G$ can be obtained from the projective representation of $G/Z^*(G)$.
\begin{thm}\label{epicenter}
	Let $G$ be a finite group  and  $Z$  be a central subgroup of $G$ such that $Z \subseteq Z^*(G)$. Then there is a bijective correspondence between the sets
$\proj(G)$  and $\proj(G/Z)$.
\end{thm}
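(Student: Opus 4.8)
The plan is to deduce the correspondence from the bijection between projective representations and ordinary representations of a representation group, and to show that the epicenter hypothesis forces a single group to serve as a representation group for both $G$ and $G/Z$. Concretely, fix a representation group $1\to M\to G^*\xrightarrow{\pi}G\to 1$ of $G$, so that $M\cong\Ho^2(G,\bC^\times)$ lies in $(G^*)'\cap Z(G^*)$ and, by \cite[Corollary 3.3]{PS}, $\proj(G)$ is in bijection with $\irr(G^*)$. Put $\hat Z=\pi^{-1}(Z)$; then $M\subseteq\hat Z$, $\hat Z/M\cong Z$ and $G^*/\hat Z\cong G/Z$. The heart of the argument is to prove that $G^*$ is simultaneously a representation group of $G/Z$, with $\hat Z$ playing the role of the multiplier; granting this, a second application of \cite[Corollary 3.3]{PS} gives a bijection between $\proj(G/Z)$ and $\irr(G^*)$, and composing the two bijections yields the desired correspondence between $\proj(G)$ and $\proj(G/Z)$.

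To show $G^*$ is a representation group of $G/Z$ I would verify the three defining conditions for $\hat Z$. First, $\hat Z\subseteq Z(G^*)$: this is exactly where $Z\subseteq Z^*(G)$ enters. The commutator map restricts to a pairing $G^*\times\hat Z\to M$ that factors through $G/(G'Z)\otimes Z$, defining a class which is the $\chi$-component attached to $Z$ in the exact sequence (\ref{Iwahori}); since a representation group realizes all of $\Ho^2(G,\bC^\times)$, the vanishing of this obstruction is equivalent to $Z$ being absorbed into the center of the covering, that is, to $Z\subseteq Z^*(G)$ (the standard Beyl--Felgner--Schmid characterization of the epicenter as the image of $Z(G^*)$ under $\pi$). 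Hence $\hat Z$ is central. Second, $\hat Z\subseteq(G^*)'$, which follows from $M\subseteq(G^*)'$ together with $Z\subseteq G'=\pi((G^*)')$ in the situations under consideration. Third, $\hat Z\cong\Ho^2(G/Z,\bC^\times)$: from (\ref{Iwahori}) the hypothesis makes $(\res,\chi)$ vanish on $\Ho^2(G,\bC^\times)$, so $\inf$ is onto with kernel $\tra\big(\Hom(Z,\bC^\times)\big)$, giving $|\Ho^2(G/Z,\bC^\times)|=|\Ho^2(G,\bC^\times)|\cdot|Z|=|\hat Z|$; matching the abelian-group structure then identifies $\hat Z$ with the Schur multiplier of $G/Z$.

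The main obstacle I expect is the first condition: translating the cohomological hypothesis $Z\subseteq Z^*(G)$ into the group-theoretic statement that the \emph{full} preimage $\hat Z$, not merely some lift of $Z$, lies in $Z(G^*)$, and doing so compatibly with the identification of $\hat Z$ as the multiplier of $G/Z$. This requires pinning down that the commutator pairing $G^*\times\hat Z\to M$ is precisely the obstruction measured by $\chi$ in (\ref{Iwahori}) and that its vanishing is equivalent to capability of $G/Z$. A secondary point is that the two bijections from \cite[Corollary 3.3]{PS} are compatible, so that the composite is a genuine bijection of the sets $\proj(G)$ and $\proj(G/Z)$; equivalently, that inflation of cocycles along $\pi\colon G\to G/Z$ and the corresponding inflation and deflation of irreducible projective representations preserve irreducibility and equivalence classes. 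This last step is routine since $\pi$ is surjective, but it must be recorded to close the argument.
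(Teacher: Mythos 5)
Your route is genuinely different from the paper's, and it contains a real gap. The paper's own proof is short and indirect: it writes down the exact sequence \eqref{Iwahori} for the pair $(G,Z)$, invokes \cite[Theorem 2.5.10]{karpilovsky} to convert the hypothesis $Z\subseteq Z^*(G)$ into surjectivity of $\inf\colon\Ho^2(G/Z,\bC^\times)\to\Ho^2(G,\bC^\times)$, and then cites \cite[Theorem 3.2]{PS}, which turns surjectivity of inflation directly into a bijection between $\proj(G)$ and $\proj(G/Z)$. You instead try to reprove that last step: you want a single group $G^*$ that is a representation group of both $G$ and $G/Z$, and then you apply \cite[Corollary 3.3]{PS} twice. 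That strategy is legitimate, and your first verification is fine: the Beyl--Felgner--Schmid identification $Z^*(G)=\pi(Z(G^*))$ for a stem cover, together with $M\subseteq Z(G^*)$, does give $\hat Z=\pi^{-1}(Z)\subseteq Z(G^*)$.

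The gap is that your second and third verifications both silently require $Z\subseteq G'$, which is not a hypothesis of the theorem. The containment $\hat Z\subseteq (G^*)'$ needs $Z\subseteq\pi\big((G^*)'\big)=G'$; you flag this and defer to ``the situations under consideration,'' but the statement is for an arbitrary finite group $G$ and an arbitrary central $Z\subseteq Z^*(G)$. Worse, your order count is incorrect: the kernel of $\inf$ is not $\tra\big(\Hom(Z,\bC^\times)\big)$ of order $|Z|$. As the sequence \eqref{Iwahori} (and the very sequence displayed in the paper's proof) shows, it is isomorphic to $\Hom(Z\cap G',\bC^\times)$, because transgression kills exactly those characters of $Z$ that extend to $G$, namely the ones trivial on $Z\cap G'$. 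Hence surjectivity of inflation only gives $|\Ho^2(G/Z,\bC^\times)|=|\Ho^2(G,\bC^\times)|\cdot|Z\cap G'|$, whereas $|\hat Z|=|\Ho^2(G,\bC^\times)|\cdot|Z|$; these agree precisely when $Z\subseteq G'$. The failure is not cosmetic: take $G=\mathbb{Z}/p^2\mathbb{Z}$ and $Z$ its subgroup of order $p$, so that $Z\subseteq Z^*(G)=G$. Here $G^*=G$ (trivial multiplier), and $\hat Z=Z$ is neither contained in $(G^*)'=1$ nor isomorphic to $\Ho^2(G/Z,\bC^\times)=1$; in fact $|\proj(G)|=p^2$ while $|\proj(G/Z)|=p$, so no bijection exists at all. (The same example shows that surjectivity of inflation alone cannot suffice in \cite[Theorem 3.2]{PS} either; some hypothesis of the type $Z\subseteq G'$ must be in force, and it is automatic in all of this paper's applications, where $G$ is special and $Z\subseteq Z(G)=G'$.) So your proof can only be completed after adding $Z\subseteq G'$ to the hypotheses. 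Finally, a smaller repair: even granting $Z\subseteq G'$, equality of orders does not by itself ``identify $\hat Z$ with the Schur multiplier of $G/Z$'' --- two finite abelian groups of equal order need not be isomorphic. You should instead observe that $1\to\hat Z\to G^*\to G/Z\to 1$ is then a stem extension, so its transgression $\Hom(\hat Z,\bC^\times)\to\Ho^2(G/Z,\bC^\times)$ is injective (no nontrivial character of $\hat Z$ extends to $G^*$ since $\hat Z\subseteq (G^*)'$), and the order count forces it to be an isomorphism, which yields $\hat Z\cong\Ho^2(G/Z,\bC^\times)$ and completes the verification that $G^*$ is a representation group of $G/Z$.
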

\begin{proof}
We have the following exact sequence 
		\[
	1 \to \Hom(Z\cap G',\mathbb C^\times)  \xrightarrow{\tra} \Ho^2(G/Z, \mathbb C^\times)   \xrightarrow{\inf}   \Ho^2(G, \mathbb C^\times).
	\]
By \cite[Theorem 2.5.10]{karpilovsky}, the map $\inf$ is surjective if and only if   $Z \subseteq Z^*(G)$.
	Hence, the result follows from \cite[Theorem 3.2]{PS}.
\end{proof}

\begin{remark}
	In \cite{HKYp5}, we provide a classification of capable groups of order $p^5$, and we describe $Z^*(G)$ for each non-capable group $G$. To study projective representations of non-capable groups $G$ of order $p^5$, it is enough to study projective representations of $G/Z^*(G)$, follows from the above result.
\end{remark}

\begin{thm}\label{capabilityG^p=p}
	Let $G$ be a special $p$-group with $|G^p|=p$, and $G$ be minimally generated by $d$ elements, $d \geq 3$. Then $G$ is not capable, and there is a bijective correspondence between the sets
	$
	\proj(G) 
	$ 
	and $\proj(G/G^p)$.
\end{thm}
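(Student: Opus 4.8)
The plan is to reduce the entire theorem to the single containment $G^p \subseteq Z^*(G)$. Once this is established, both assertions follow at once: since $|G^p| = p$ we have $G^p \neq 1$, so $Z^*(G) \neq 1$ and hence $G$ is not capable; and applying Theorem \ref{epicenter} with $Z = G^p \subseteq Z^*(G)$ gives the desired bijection between $\proj(G)$ and $\proj(G/G^p)$. Thus the whole content of the statement is the inclusion $G^p \subseteq Z^*(G)$, which by \cite[Lemma 2.4]{hatuispecial} is equivalent to the tensor-space containment $G/G' \otimes G^p \subseteq X$, where $X = X_1 + X_2$ as fixed in the Prerequisites.

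First I would record the structure of the $p$-th power map. In a class-$2$ group one has $(xy)^p = x^p y^p [y,x]^{\binom{p}{2}}$, and since $p$ is odd, $\binom{p}{2} = p\cdot\frac{p-1}{2} \equiv 0 \pmod p$ while $[y,x] \in G'$ has order dividing $p$; hence the $p$-th power map $P: G \to G^p$ is a homomorphism. Because $G'$ is elementary abelian it lies in $\ker P$, so $P$ factors through a surjective homomorphism $\bar P : G/G' \to G^p \cong \bZ/p\bZ$. Fixing a generator $w$ of $G^p$ identifies $G/G' \otimes G^p$ with the $\bZ/p\bZ$-vector space $V := G/G'$, under which $gG' \otimes g^p$ corresponds to $\bar P(\bar g)\,\bar g$.

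The crux is then to show that $X_2$ already fills out all of $G/G' \otimes G^p$, so that the Jacobi subspace $X_1$ is not even needed. Writing $\ell := \bar P$ (a nonzero functional, since $G^p \neq 1$) and $Q(v) := \ell(v)\, v$, the subspace of $V$ corresponding to $X_2$ is $\mathrm{span}\{Q(v) : v \in V\}$. I would choose $u \in V$ with $\ell(u) = 1$, note that $Q(u) = u$ lies in this span, and then use the polarization identity $Q(u+v) - Q(u) - Q(v) = v + \ell(v)\, u$ to deduce $v \in \mathrm{span}\{Q(v) : v \in V\}$ for every $v \in V$. This gives $X_2 = G/G' \otimes G^p$, hence $G/G' \otimes G^p = X_2 \subseteq X$, so $G^p \subseteq Z^*(G)$ by \cite[Lemma 2.4]{hatuispecial}, and the theorem follows as explained in the first paragraph.

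The main (and essentially only) obstacle is the surjectivity computation for $X_2$ in the third paragraph; everything else is bookkeeping. The point that makes it go through is that $|G^p| = p$ forces $\bar P$ to be a single nonzero linear form rather than a higher-rank map, so the quadratic map $Q$ is built from one functional and the polarization argument closes up cleanly. I note that the hypothesis $d \geq 3$ is not actually needed for this step (any $d \geq 1$ with $\ell \neq 0$ suffices); it is inherited from the ambient setup of the surrounding results.
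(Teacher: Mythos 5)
Your proof is correct and takes essentially the same route as the paper: reduce everything to the containment $G^p \subseteq Z^*(G)$ via \cite[Lemma 2.4]{hatuispecial} and then conclude by Theorem \ref{epicenter} (non-capability being immediate since $G^p \neq 1$ forces $Z^*(G) \neq 1$). The only difference is that the paper simply asserts the key containment $G/G' \otimes G^p \subseteq X$ without justification, whereas you actually prove it --- showing via the homomorphism property of the $p$-th power map and the polarization identity that $X_2$ alone already equals $G/G' \otimes G^p$ --- which is a genuine detail the paper leaves to the reader.
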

\begin{proof}
	By \cite[Lemma 2.4]{hatuispecial}, we have  that $G^p \subseteq Z^*(G)$ as $G/G'\otimes G^p \subseteq X$. Hence, the result follows from Theorem \ref{epicenter}.
	\end{proof}

\begin{thm}\label{maxrankSchurmultiplier}
	Let $G$ be a special $p$-group of maximum rank. 
	Then
	\[
	\Ho^2(G, \mathbb C^\times)   \cong \Hom\Big(\frac{G/G' \otimes G'}{X},\mathbb C^\times\Big).
	\]
	In particular, if $G$ is minimally generated by $d$ elements, then
	\[ \Ho^2(G, \mathbb C^\times)   \cong \begin{cases} 
	(\mathbb Z/p\mathbb Z)^{\frac{1}{3}d(d-1)(d+1)},  & G^p=1, \\
	(\mathbb Z/p\mathbb Z)^{\frac{1}{3}d(d-1)(d+1)-d},  & |G^p|=p. \\
	\end{cases}
	\]
\end{thm}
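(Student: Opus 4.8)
The plan is to read the first isomorphism directly off the exact sequence of Theorem~\ref{exactsequence}, and then obtain the explicit orders by a dimension count over $\mathbb{Z}/p\mathbb{Z}$. By Theorem~\ref{exactsequence} the map $\overline{\chi}$ is surjective with kernel equal to the image of $\inf$, so the isomorphism $\Ho^2(G,\mathbb{C}^\times)\cong\Hom\big(\frac{G/G'\otimes G'}{X},\mathbb{C}^\times\big)$ will follow once I show that $\inf$ is the zero map, equivalently that $\tra\colon\Hom(G',\mathbb{C}^\times)\to\Ho^2(G/G',\mathbb{C}^\times)$ is surjective. Since $\tra$ is injective, this is purely a counting argument: $G/G'\cong(\mathbb{Z}/p\mathbb{Z})^d$ has Schur multiplier $(\mathbb{Z}/p\mathbb{Z})^{\frac12 d(d-1)}$, while $G$ being of maximum rank means exactly that $G'\cong(\mathbb{Z}/p\mathbb{Z})^{\frac12 d(d-1)}$. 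Hence $\Hom(G',\mathbb{C}^\times)$ and $\Ho^2(G/G',\mathbb{C}^\times)$ are finite groups of the same order $p^{\frac12 d(d-1)}$, so the injective map $\tra$ is bijective. Thus $\inf=0$ and $\overline{\chi}$ is the desired isomorphism.

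For the explicit orders I would dualize: as a finite elementary abelian $p$-group, $\Ho^2(G,\mathbb{C}^\times)\cong\frac{G/G'\otimes G'}{X}$, so everything reduces to computing $\dim_{\mathbb{Z}/p\mathbb{Z}}X$, with $\dim(G/G'\otimes G')=d\cdot\frac12 d(d-1)=\frac12 d^2(d-1)$. Writing $W=G/G'$ and identifying $G'\cong\Lambda^2 W$ via $y_{ij}\leftrightarrow x_iG'\wedge x_jG'$ (a surjection that is an isomorphism by dimension), the space $X_1$ is the image of the map $\Lambda^3 W\to W\otimes\Lambda^2 W$ sending $x_aG'\wedge x_bG'\wedge x_cG'$ to the Jacobi relator $J_{abc}$. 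This map is injective: for $a<b<c$ the basis vector $x_aG'\otimes y_{bc}$ occurs in $J_{abc}$ and in no other relator, so the relators are linearly independent and $\dim X_1=\frac16 d(d-1)(d-2)$. When $G^p=1$ one has $X_2=0$, hence $X=X_1$, and $\dim\frac{G/G'\otimes G'}{X}=\frac12 d^2(d-1)-\frac16 d(d-1)(d-2)=\frac13 d(d-1)(d+1)$, as asserted.

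The case $|G^p|=p$ is where the genuine work lies. Since $p$ is odd, the class-two identity $(ab)^p=a^pb^p[b,a]^{\binom{p}{2}}$ together with $(G')^p=1$ shows the $p$-power map induces a linear map $P\colon W\to G'$ with image $G^p$, here one-dimensional, say $G^p=\langle w_0\rangle$. A polarization argument then gives $X_2=W\otimes\langle w_0\rangle$, so $\dim X_2=d$. The crux is to prove $X_1\cap X_2=0$: granting this, $\dim X=\frac16 d(d-1)(d-2)+d$ and the quotient has dimension $\frac13 d(d-1)(d+1)-d$. I would argue as follows. Every relator $J_{abc}$ involves only basis vectors $x_iG'\otimes y_{jk}$ with $i\notin\{j,k\}$, so $X_1$ lies in this off-diagonal subspace, and each such vector occurs in the single relator indexed by $\{i,j,k\}$. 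If $0\neq v\otimes w_0\in X_1$, then the vanishing of the diagonal part forces $v_i=0$ for every $i$ in the support $S$ of $w_0$; pick $i_0\notin S$ with $v_{i_0}\neq 0$ and a pair $(j_0,k_0)$ in the support of $w_0$, so that $i_0\notin\{j_0,k_0\}$. The coefficient of $x_{i_0}G'\otimes y_{j_0k_0}$ in $v\otimes w_0$ is $v_{i_0}c_{j_0k_0}\neq 0$, forcing the relator $J_{\{i_0,j_0,k_0\}}$ to appear with nonzero coefficient; but that relator also contributes a nonzero coefficient to $x_{j_0}G'\otimes y_{i_0k_0}$, whose coefficient in $v\otimes w_0$ is $v_{j_0}c_{i_0k_0}=0$ because $j_0\in S$, a contradiction. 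Hence $X_1\cap X_2=0$, which completes the count. The main obstacle throughout is exactly this intersection computation: recognizing the combinatorial pattern of which basis vectors the Jacobi relators use and matching it against the rank-one shape of $X_2$.
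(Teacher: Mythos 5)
Your proof is correct, and it splits naturally into two halves that compare differently with the paper. The first half --- deducing from Theorem~\ref{exactsequence} that $\overline{\chi}$ is an isomorphism because $\tra$ is an injection between $\Hom(G',\mathbb C^\times)$ and $\Ho^2(G/G',\mathbb C^\times)$, two groups of the same order $p^{\frac{1}{2}d(d-1)}$ in the maximum-rank case, hence bijective, forcing the image of $\inf$ to be trivial --- is exactly the paper's argument. Where you genuinely diverge is the explicit order: the paper obtains both numerical values in one stroke by citing \cite[Corollary 2.10]{Niroomand}, whereas you compute $\dim_{\mathbb Z/p\mathbb Z}X$ from scratch in three steps, each of which is sound: (i) identifying $G'\cong\Lambda^2(G/G')$ (surjective since $G'$ is generated by commutators, bijective by dimension), so that $X_1$ is the image of the Koszul-type map $\Lambda^3(G/G')\to G/G'\otimes\Lambda^2(G/G')$, whose injectivity follows from your observation that each basis vector $x_iG'\otimes y_{jk}$ with $i\notin\{j,k\}$ occurs in exactly one Jacobi relator, giving $\dim X_1=\binom{d}{3}$; (ii) the polarization argument giving $X_2=G/G'\otimes G^p$, of dimension $d$ when $|G^p|=p$; and (iii) the support argument showing $X_1\cap X_2=0$, which is valid as written: vanishing of the diagonal part of $v\otimes w_0$ forces $v_i=0$ for all $i$ in the support of $w_0$, and any surviving off-diagonal coefficient would force a relator whose companion term $x_{j_0}G'\otimes y_{i_0k_0}$ cannot be matched. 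Your route makes the theorem self-contained, and as a bonus it proves facts the paper later asserts without argument: in the proofs of Theorems~\ref{cocycledescription,repgroup_exp p} and~\ref{cocycleGp=p,projGp=p} the paper uses $|X|=p^{\binom{d}{3}}$, the explicit generating sets of $X$, and (implicitly, for the count when $|G^p|=p$) that the sum $X_1+X_2$ is direct --- all consequences of your computation. What the paper's route buys is brevity, at the price of importing the count from \cite{Niroomand}.
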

\begin{proof} 
		Since  $G/G'\cong (\mathbb Z/p\mathbb Z)^{d}$ and $G'\cong (\mathbb Z/p\mathbb Z)^{\frac{1}{2}d(d-1)}$, $\Ho^2(G/G', \mathbb C^\times) \cong 
		\Hom(G' ,\mathbb C^\times)$. Hence, result follows from Theorem \ref{exactsequence} and \cite[Corollary 2.10]{Niroomand}. 
		\end{proof}

		In Theorem \ref{maxrankSchurmultiplier}, 
		$ \overline{\chi}: \Ho^2(G, \mathbb C^\times)   \to \Hom\big(\frac{G/G' \otimes G'}{X},\mathbb C^\times\big)$
		is an isomorphism, and the inverse map		
		 $\eta: \Hom\Big(\frac{G/G' \otimes G'}{X},\mathbb C^\times\Big) \to \Ho^2(G, \mathbb C^\times)$ is given as follows: 
		let $\{g_1, g_2, \ldots, g_n\}$ be a transversal of $G'$ in $G$.
		Then any element of $G$ is of the form $g_ih$ for $h \in G'$. 
Consider the map  $\eta$   given by 
 $\eta(f)=[\alpha]$ such that 
$$\alpha(g_ih_1,g_jh_2 )=f((g_iG' \otimes h_2)X), h_1,h_2 \in G'.$$  
Since $ \overline{\chi} \circ \eta(f)=f$ and  $\overline{\chi}$ is an isomorphism,  $\eta$ is an inverse map of $\bar{\chi}$.

\section{Proofs of the main results}
In this section, we prove our main results.

\begin{proof}[\textbf{Proof of Theorem \ref{specialrank2}}]
		
\emph{(1)} Suppose $G^p=G'$. By \cite[Theorem 1.1]{hatuispecial}, for every central subgroup $Z\subseteq Z^*(G)$ of order $p$, 
		$$G/Z \cong ES_{p^2}(p^{2m+1}) \times (\mathbb Z/p\mathbb Z)^{(d-2m)}\text{ for some }m \geq 1.$$
		Suppose $H$ denotes the group $ES_{p^2}(p^{2m+1}) \times (\mathbb Z/p\mathbb Z)^{(d-2m)}$. Observe that  $H/H' \cong (\mathbb Z/p\mathbb Z)^{d}$.
		Using \cite[Theorem 2.2.10, Theorem 3.3.6]{karpilovsky}, we get
		$\Ho^2(H, \mathbb C^\times) \cong \frac{\Ho^2( H/H', \mathbb C^\times)}{\Hom(H',\mathbb C^\times)}$, and hence,
		the map 
		$$\inf: \Ho^2( H/H', \mathbb C^\times)\to  \Ho^2(H, \mathbb C^\times)$$ is surjective.
		Thus by \cite[Theorem 3.2]{PS}, 
		there is a bijective correspondence between   $\proj(H)$ and $\proj( (\mathbb Z/p\mathbb Z)^{d})$. Now the result follows from Theorem \ref{epicenter}.\\

\emph{(2)}	It follows from \cite[Theorem 1.3, Theorem 1.4]{hatuispecial} that   
	$$G/Z \cong ES_p(p^{2m+1}) \times (\mathbb Z/p\mathbb Z)^{(d-2m)}, ~m \geq 1,$$
	for some central subgroup $Z\subseteq Z^*(G)$ of order $p$.
	Suppose $H$ denotes the group $ES_p(p^{2m+1}) \times (\mathbb Z/p\mathbb Z)^{(d-2m)}$. \\	
	
	$(i)$  Observe that  $H/H' \cong (\mathbb Z/p\mathbb Z)^{d}$.  For  $m\geq 2$,
	the map 
	$$\inf: \Ho^2( H/H', \mathbb C^\times)\to  \Ho^2(H, \mathbb C^\times)$$ is surjective. By \cite[Theorem 3.2]{PS}, it follows that
	there is a bijective correspondence between   $\proj(H)$ and $\proj( (\mathbb Z/p\mathbb Z)^{d})$.  Now result follows from Theorem \ref{epicenter}.\\
	
	$(ii)$ Observe that 
	\begin{eqnarray}\label{eq1}
		G^* &\cong &\big( \langle x_1,z, y_r, 1 \leq r \leq d-2\mid [z,x_1]=z_1, [y_r, x_1]=w_{r1}, [y_i,y_j]=z_{ij}, x_k^p=y_r^p=z^p=1,\nonumber\\ 
		&&  \hspace{4cm}  1\leq i <j \leq d-2\rangle   \times \langle  z_2\rangle \times \langle w_{r2}, 1 \leq r \leq d-2 \rangle\big) \rtimes \langle x_2\rangle\nonumber\\
		&\cong&  \Big(\big(\langle y_r, 1 \leq r \leq d-2\mid  [y_i,y_j]=z_{ij}, 1\leq i <j \leq d-2 \rangle \times  \langle w_{rk}, 1 \leq r \leq d-2, k=1,2 \rangle\nonumber\\
		&&  \hspace{6cm}  \langle z,z_1,z_2\rangle   \big)  \rtimes \langle x_1\rangle    \Big) \rtimes \langle x_2\rangle\nonumber\\
		&\cong& \Big(\big(K \times (\mathbb Z/p\mathbb Z)^{2(d-2)+3}\big) \rtimes  \mathbb Z/p\mathbb Z\Big) \rtimes  \mathbb Z/p\mathbb Z,
	\end{eqnarray}	
where 
\[
K=\langle y_r, 1 \leq r \leq d-2\mid  [y_i,y_j]=z_{ij}, 1\leq i <j \leq d-2 \rangle.
\]
	From \cite[Theorem 1.4]{PS}, it follows that $K$ is a representation group of the abelian group $(\mathbb Z/p\mathbb Z)^{d-2}$. Hence $K$ has order $p^{\frac{1}{2}(d-2)(d-3)+(d-2)}$. 
	So $G^*$ is of order 
	$$p^{\frac{1}{2}(d-2)(d-3)+(d-2)+(2d+1)}=p^{\frac{1}{2}d(d+1)+2}.$$ 
Using \cite[Theorem 2.2.10, Theorem 3.3.6]{karpilovsky}, we have 
	$$\Ho^2(G/Z,\mathbb C^\times)\cong (\mathbb Z/p\mathbb Z)^{\frac{1}{2}d(d-1)+1}.$$
	Observe that $G/Z$ is of order $p^{d+1}$ 
	 and $$\Ho^2(G/Z,\mathbb C^\times)\cong \langle z_1,z_2, w_{r1}, w_{r2}, z_{ij}, 1 \leq r \leq d-2, 1 \leq i <j \leq d-2  \rangle.$$	 
	 Therefore, 
	 $$G^*/\langle z_1, z_2, w_{r1}, w_{r2}, z_{ij} , 1 \leq r \leq d-2, 1 \leq i <j \leq d-2  \rangle \cong G/Z,$$ and $G^*$ is a representation group of $G/Z$.  By Theorem \cite[Corollary 3.3]{PS}, there is a bijection between the sets $\proj(G/Z)$ and $\irr(G^*)$.
	Now result follows from Theorem \ref{epicenter}.
	\end{proof}

\begin{proof}[\textbf{Proof of Theorem \ref{cocycledescription,repgroup_exp p}}]
By \cite[Lemma 2.1]{Niroomand},  $G$ has the presentation
\[
\langle x_r, 1\leq r \leq d  \mid [x_i,x_j]=y_{ij}, x_r^p=y_{ij}^p=1,~1 \leq i<j \leq d\rangle.
\]
Every element  $g \in G$ can be written uniquely as 
\[
g=\prod_{i=1}^d x_i^{r_i}  \prod_{1 \leq j <k \leq d} y_{jk}^{s_{jk}} \text{ for } 0 \leq r_i , s_{jk} <p. 
\]
\emph{(1)}
It follows from Theorem \ref{maxrankSchurmultiplier} that, upto cohomologous every $2$-cocycle $\alpha$ is of the form  
$$\alpha\big(\prod_{i=1}^d x_i^{r_i}  \prod_{1 \leq j <k \leq d} y_{jk}^{s_{jk}}, \prod_{i=1}^d x_i^{r'_i}  \prod_{1 \leq j <k \leq d} y_{jk}^{s'_{jk}}\big) =f\Big( \big(\prod_{i=1}^d x_i^{r_i} G' \otimes \prod_{1 \leq j <k \leq d} y_{jk}^{s'_{jk}}\big)X\Big),$$ 
for a homomorphism $f \in \Hom(\frac{G/G'\otimes G'}{X}, \mathbb C^\times)$. 
We have  $|X|=p^{d \choose 3}$ and $$X =\langle  x_iG'\otimes y_{jk}+x_jG'\otimes y_{ki}+x_kG'\otimes y_{ij}, 1 \leq i<j<k\leq d\rangle$$
Observe that $$f\big((x_kG'\otimes y_{ij}) X\big)=f\big((x_iG'\otimes y_{jk})X\big)^{-1}f\big((x_jG'\otimes y_{ik}) X\big)~ \forall~ 1 \leq i<j<k\leq d,$$
and hence, we have
\begin{eqnarray*}
	&&f\Big(\big(\prod_{i=1}^d x_i^{r_i}G'\otimes \prod_{1 \leq j <k \leq d} y_{jk}^{s'_{jk}}\big)X\Big)=\prod_{\substack{ 1 \leq i \leq d
			\\ 1 \leq j <k \leq d}} f\big( (x_i G'\otimes  y_{jk})X\big)^{r_is'_{jk}}\\
		&=&\prod_{1\leq i <j <k \leq d} f\big(( x_i G'\otimes  y_{jk})X\big)^{r_is'_{jk}}\prod_{1\leq i <j <k \leq d} f\big( (x_j G' \otimes y_{ik})X\big)^{r_js'_{ik}} \\
		&&  \prod_{1\leq i <j <k \leq d} f\big(( x_kG'\otimes y_{ij})X\big)^{r_ks'_{ij}}
		 \prod_{1 \leq i<j\leq d }f\big(( x_iG'\otimes   y_{ij})X\big)^{r_is'_{ij}}  \\
		 &&\prod_{1 \leq i<j\leq d }f\big(( x_j G'\otimes  y_{ij})X\big)^{r_js'_{ij}} \\
		&=&\prod_{1\leq i <j <k \leq d} f\big(( x_i G'\otimes   y_{jk})X\big)^{r_is'_{jk}-r_ks'_{ij}}\prod_{1\leq i <j <k \leq d} f\big(( x_j G'\otimes   y_{ik})X\big)^{r_js'_{ik}+r_ks'_{ij}} \\
		&&	  \prod_{1 \leq i<j\leq d }f\big(( x_i G' \otimes   y_{ij})X\big)^{r_is'_{ij}}  
	\prod_{1 \leq i<j\leq d }f\big(( x_j G'\otimes   y_{ij})X\big)^{r_js'_{ij}} 
	\end{eqnarray*}
Assume  that $f\big(( x_i G'\otimes y_{jk})X\big)=\mu_{ijk} \in \mathbb C^\times, 1\leq i \leq d , 1 \leq j<k \leq d$. We have $f\big((x_i^pG'\otimes y_{jk})X\big)=\mu_{ijk}^p=1$. Hence the result follows.\\

\emph{(2)} 
Observe that $|G|=p^{\frac{1}{2}d(d+1)}$ and  by Theorem \ref{maxrankSchurmultiplier}, $$\Ho^2(G, \mathbb C^\times)\cong (\mathbb Z/p\mathbb Z)^{\frac{1}{3}d(d-1)(d+1)}.$$
First we show that $H_d^*$ has order $|G||\Ho^2(G,\mathbb C^\times )|=p^{\frac{1}{2}d(d+1)+\frac{1}{3}d(d-1)(d+1)}=p^{\frac{1}{6}d(d+1)(2d+1)}$.
We prove it by induction on $d$. 
	For $d=2$,  
	$H_2^*$ has the following presentation 
	$$\langle x_1,x_2 \mid [x_1,x_2]=y_{12}, [x_1, y_{12}]=z_{112}, [x_2, y_{12}]=z_{212}, x_1^p=x_2^p= y_{12}^p=1 \rangle $$ which is a group of order $p^5$. Hence the result is true for $d=2$. Now 
\begin{eqnarray}\label{eq2}
H_d^*\cong  && \Big(\big\langle x_i,  1 \leq i \leq d-1 \mid [x_j, x_k]=y_{jk}, [x_i, y_{jk}]=z_{ijk},     z_{rmn}=z_{nmr}z_{mnr}^{-1},  x_i^p=1,\nonumber\\
&& 1 \leq j < k \leq d-1,   1 \leq m<n<r\leq d-1 \big\rangle \times \langle y_{id}\rangle _{i=1}^{d-1}  \times \langle z_{iid}\rangle _{i=1}^{d-1}   \times \langle z_{did}\rangle _{i=1}^{d-1} \nonumber \\
 && \hspace{4cm} \times \langle z_{mnd}\rangle _{1 \leq m <n \leq d-1}\times \langle z_{nmd}\rangle _{1 \leq m <n \leq d-1}  \Big) \rtimes \langle  x_d \rangle \nonumber \\
&& \cong \big(H_{d-1}^* \times  (\mathbb Z/p\mathbb Z)^{3(d-1)+2.\frac{1}{2}(d-1)(d-2)}\big) \rtimes \mathbb Z/p\mathbb Z,
\end{eqnarray}
where 
	\begin{eqnarray*}
H_{d-1}^*=\langle x_i,  1 \leq i \leq d-1 \mid [x_j, x_k]=y_{jk}, [x_i, y_{jk}]=z_{ijk},     z_{rmn}=z_{nmr}z_{mnr}^{-1},  x_i^p=1,   \\
1 \leq m<n<r\leq d-1, 1 \leq j < k \leq d-1 \rangle.
	\end{eqnarray*}
By induction, $|H_{d-1}^*|=p^{\frac{1}{6}(d-1)d(2d-1)}$.
Hence $H_d^*$ has order 
$$p^{\frac{1}{6}(d-1)d(2d-1)+3(d-1)+(d-1)(d-2)+1}=p^{\frac{1}{6}d(d+1)(2d+1)}.$$
Consider the subgroup $$K=\langle {z_{mnr}}\rangle _{1\leq m < n <r \leq d} \times \langle z_{nmr} \rangle _{1\leq m < n <r \leq d} \times \langle z_{mmn} \rangle _{1\leq m < n \leq d} \times \langle z_{nmn} \rangle _{1\leq m < n \leq d}$$ of $G$. 
Then  $K \cong \Ho^2(G,\mathbb C^\times )$ and  $H_d^*/K \cong G$. Hence the result follows.
	\end{proof}
	


\begin{proof}[\textbf{Proof of Theorem  \ref{cocycleGp=p,projGp=p}}]
	By \cite[Theorem 2.13, Corollary 2.14]{Niroomand}, $G$ has the presentation
	\[
	\langle x_r, 1\leq r \leq d  \mid [x_i,x_j]=y_{ij}, y_{12}=x_1^p, x_1^{p^2}=x_j^p=y_{ij}^p=1,~1 \leq i<j \leq d\rangle.
	\]
	\emph{(1)} Here we have  $$X =\langle  x_iG'\otimes y_{jk}+x_jG'\otimes y_{ki}+x_kG'\otimes y_{ij},  ~x_r\otimes y_{12}, 1 \leq i<j<k\leq d, 1\leq r \leq d\rangle.$$
	Now proof goes on the same lines as proof of Theorem \ref{cocycledescription,repgroup_exp p}.\\

\emph{(2)} The group $G/G^p$ has the following presentation
	\[
G/G^p \cong 	\langle x_r, 1\leq r \leq d  \mid [x_i,x_j]=y_{ij}, y_{12}=1, x_1^{p}=x_j^p=y_{ij}^p=1,~1 \leq i<j \leq d\rangle.
	\]
Since 
\begin{eqnarray}  \label{eq10}
K_d^* \cong H_d^*/\langle z_{i12} \rangle_{1 \leq i \leq d},
\end{eqnarray}
 $K_d^*$ is  of order
$p^{\frac{1}{6}d(d+1)(2d+1)-d}.$
Now consider the subgroup 
	\begin{eqnarray*}
		L=\langle y_{12}\rangle \times \big\langle {z_{mnr}}\big\rangle _{1\leq m < n <r \leq d} \times \big\langle z_{nmr} \big\rangle _{\substack{1\leq m < n <r \leq d\\ (m,n)\neq (1,2)}} \times \big\langle z_{mmn} \big\rangle _{\substack{1\leq m < n \leq d \\ (m,n)\neq (1,2)}}
		 \times \big\langle z_{nmn} \big\rangle_{\substack{1\leq m < n \leq d \\ (m,n)\neq (1,2)}} 
			\end{eqnarray*}
			of $K_d^*$. Since  
			$$\Ho^2(G/G^p, \mathbb C^\times) \cong (\mathbb Z/p\mathbb Z)^{\frac{1}{3}d(d-1)(d+1)-d+1} \cong L$$ and  $K_d^*/L \cong G/G^p$, $K_d^*$ is a representation group of $G/G^p$. 
	 Now result follows from Theorem \ref{capabilityG^p=p}.
	\end{proof}

\section{Ordinary representations of $G^*, H_d^*, K_d^*$ for $d \geq 3$}\label{ordinary}
It follows by the previous discussions that, to complete our study, it is enough to understand   the inequivalent irreducible  complex ordinary representations of the following groups.
	\begin{eqnarray*}
	 G^*&=&\langle 
	x_1,x_2, y_r, 1 \leq r \leq d-2\mid  [x_1,x_2]=z, [z,x_k]=z_k, [y_r, x_k]=w_{rk}, [y_i,y_j]=z_{ij}, \\
	&&\hspace{5cm}  x_k^p=y_r^p=z^p=1, 1\leq i <j \leq d-2,   1\leq k \leq 2
	\rangle.\\
	H_d^*&=&\langle x_i,  1 \leq i \leq d \mid [x_j, x_k]=y_{jk}, [x_i, y_{jk}]=z_{ijk},  z_{rmn}=z_{nmr}z_{mnr}^{-1}, x_i^p=y_{jk}^p=1, \\
 && \hspace{7cm}	1 \leq m<n<r\leq d,  
	1 \leq j < k \leq d \rangle.\\
		K_d^*&=&\langle x_i,  1 \leq i \leq d \mid [x_j, x_k]=y_{jk},  [x_i, y_{jk}]=z_{ijk},   z_{i12}=1, 
	z_{rmn}=z_{nmr}z_{mnr}^{-1}, \\
	&& \hspace{5cm}	x_i^p=y_{jk}^p=1,
1 \leq m<n<r\leq d,  
	1 \leq j < k \leq d \rangle.
\end{eqnarray*}
The results given in \cite[Section 4]{PS}  explain a method for a construction of ordinary representations, which helps us to describe ordinary representations of these groups.

\subsection{$\irr(G^*)$}
By \eqref{eq1}, 
\begin{eqnarray*}
	G^*
	&\cong & \Big(\big(K \times (\mathbb Z/p\mathbb Z)^{2(d-2)+3}\big) \rtimes  \mathbb Z/p\mathbb Z\Big) \rtimes  \mathbb Z/p\mathbb Z.
\end{eqnarray*}
We study $\irr(G^*)$  as follows: Suppose $N^*=\Big(\big(K \times (\mathbb Z/p\mathbb Z)^{2(d-2)+3}\big) \rtimes  \mathbb Z/p\mathbb Z\Big) $.
Then $N^*$ is a normal subgroup of $G^*$ such that $G^*/N^*$ is a cyclic group of order $p$. Hence, we determine the inertia group of the elements of $\irr(N^*)$ in $G^*$, and then the construction is obtained by \cite[Proposition 4.2]{PS}.

We study $\irr(N^*)$  as follows:    $N^*$ has a normal subgroup $N=\big(K \times (\mathbb Z/p\mathbb Z)^{2(d-2)+3}\big)$  such that $N^*/N$ is a cyclic group of order $p$. Since $N$ is of nilpotency class $2$, the irreducible representations of $N$ are obtained from one dimensional representation of the radical of each central character as described in \cite[Section 4.1]{PS}. 
Then it remains to determine the inertia group of these representations of $N$ in $N^*$, and then the construction is obtained by \cite[Proposition 4.2]{PS}.


\subsection{$\irr(H_d^*)$ and $\irr(K_d^*)$}
By \eqref{eq2} and \eqref{eq10},
	\[
	H_d^* 
	 \cong  \big(H_{d-1}^* \times  (\mathbb Z/p\mathbb Z)^{d^2-1}\big) \rtimes \mathbb Z/p\mathbb Z \text{ and }
	K_d^*=\frac{H_d^*}{\langle {z_{i12}} \rangle_{1 \leq i \leq d}}.
	\]
Observe that,	
\[
K_d^* \cong  \big(K_{d-1}^*  \times  (\mathbb Z/p\mathbb Z)^{d^2-2}\big) \rtimes \mathbb Z/p\mathbb Z.
\]
We use inductive process  on $d$ to study $\irr(H_d^*)$ and $\irr(K_d^*)$ for $d \geq 3$. 
For $d=3$,  
	\begin{eqnarray*}
		H_3^*&=&\langle x_i, 1 \leq i \leq 3 \mid [x_j,x_k]=y_{jk}, [x_i, y_{jk}]=z_{ijk}, z_{321}=z_{123}z_{213}^{-1}, \\
	&& \hspace{7cm} x_i^p= y_{jk}^p=1, 1\leq j <k\leq 3 \rangle.\\
	&\cong & \big(\langle x_1,x_2 \mid [x_1,x_2]=y_{12}, [x_1, y_{12}]=z_{112}, [x_2, y_{12}]=z_{212}, x_i^p= y_{jk}^p=1 \rangle\\
	&& \hspace{6cm} \times  (\mathbb Z/p\mathbb Z)^{8}\big) \rtimes \mathbb Z/p\mathbb Z.\\
	K_3^*&=&H_3^*/\langle z_{112}, z_{212}, z_{312}\rangle.\\
	&\cong &  \big(\langle x_1,x_2 \mid [x_1,x_2]=y_{12}, x_1^p=x_2^p= y_{12}^p=1 \rangle \times  (\mathbb Z/p\mathbb Z)^{7}\big) \rtimes \mathbb Z/p\mathbb Z.
		\end{eqnarray*}
		Now consider the  Heisenberg group over $\mathbb Z/p\mathbb Z$, say $H_3(\mathbb Z/p\mathbb Z)$, and $\hat{H}(p,1)$ is a representation group of $H_3(\mathbb Z/p\mathbb Z)$; see \cite[Theorem 1.2]{PS}. Then we have
\[
H_3^* \cong \big(\hat{H}(p,1)  \times  (\mathbb Z/p\mathbb Z)^{8}\big) \rtimes \mathbb Z/p\mathbb Z.
\]
\[
K_3^* \cong \big(H_3(\mathbb Z/p\mathbb Z) \times  (\mathbb Z/p\mathbb Z)^{7}\big) \rtimes \mathbb Z/p\mathbb Z.
\] 
It is easy to see that the groups $H_3^*$ and $K_3^*$ have order $p^{14}$ and $p^{11}$ respectively. 
The groups $H_3^*$ and $K_3^*$ have normal subgroups  $\hat{H}(p,1)  \times  (\mathbb Z/p\mathbb Z)^{8}$ and $H_3(\mathbb Z/p\mathbb Z) \times  (\mathbb Z/p\mathbb Z)^{7}$ respectively such that the corresponding quotients are cyclic groups of order $p$. Hence, the ordinary representations of these groups can be obtained using the theory described in \cite[Section 4]{PS}.

Now assume $d >3$. The groups $H_d^*$ and $K_d^*$ have normal subgroups $N_H=\big(H_{d-1}^* \times  (\mathbb Z/p\mathbb Z)^{d^2-1}\big)$ and $N_K=\big(K_{d-1}^* \times  (\mathbb Z/p\mathbb Z)^{d^2-2}\big)$  respectively. Since  $H_d^*/N_H$ and  $K_d^*/N_K$ are cyclic groups of order $p$, it remains to determine the inertia group of  the elements of $\irr(N_H)$ and $\irr(N_K)$ in $H_d^* $ and $K_d^*$,  respectively. Then, the construction is obtained by \cite[Proposition 4.2]{PS}.

\section*{Acknowledgements} 

The author  thanks  the National Institute of Science Education and Research, Bhubaneswar for providing an excellent research facility.

\bibliographystyle{amsplain}
\bibliography{projective}

\end{document}